\theoremstyle{plain}
\numberwithin{equation}{section}
\newtheorem{definition}{Definition}[section]
\newtheorem{theorem}[definition]{Theorem}
\newtheorem*{theorem*}{Theorem}
\newtheorem*{remark*}{Remark}
\newtheorem*{sideremark*}{Side Remark}
\newtheorem*{claim*}{Claim}
\newtheorem*{q*}{Question}
\newtheorem{lemma}[definition]{Lemma}
\newtheorem{corollary}[definition]{Corollary}
\newtheorem*{corollary*}{Corollary}
\newtheorem{proposition}[definition]{Proposition}
\newcommand{\R}{\mathbb{R}}
\newcommand{\mi}{{M^\varepsilon}}
\newcommand{\gi}{g^\varepsilon}
\newcommand{\ffi}{{F^\varepsilon}}
\newcommand{\map}{\rightarrow}
\newcommand{\xe}{{X_\varepsilon}}
\newcommand{\ye}{{Y_\varepsilon}}
\newcommand{\ffiu}{{(F^\varepsilon)^*}}
\newcommand{\G}{\Gamma}
\newcommand{\dd}{{\rm d}}
\newcommand{\na}{\nabla}
\newcommand{\nai}{\na^\varepsilon}
\newcommand{\euc}{{\mathfrak{e}}}
\newcommand{\phii}{{\Phi^{\varepsilon}}}
\newcommand{\barna}{\widetilde{\na}}
\newcommand{\comp}{{\phii\circ\ffi}}
\newcommand{\bi}{B^{\varepsilon}}
\newcommand{\ze}{{Z_\varepsilon}}
\newcommand{\we}{{W_\varepsilon}}
\newcommand{\emb}{\hookrightarrow}
\newcommand{\e}{\varepsilon}
\newcommand{\loc}{{\rm loc}}
\newcommand{\tM}{{\widetilde{M}}}
\newcommand{\tg}{{\tilde{g}}}
\newcommand{\Hom}{{\rm Hom}}
\newcommand{\so}{{\rm SO}}
\newcommand{\dist}{{\rm dist}}
\newcommand{\p}{\partial}
\newcommand{\RR}{{\mathscr{R}}}
\newcommand{\snu}{{\mathbb{S}^d}}
\newcommand{\can}{{\rm can}}
\newcommand{\cof}{{\rm cof}}
\newcommand{\dvg}{{\,\dd V_g}}
\newcommand{\tG}{{\widetilde{\Gamma}}}
\newcommand{\bigphi}{{\overline{\Phi}}}
\newcommand{\eucl}{{(\R^{d+1},\euc)}}
\newcommand{\nue}{{\mathbf \nu}^\varepsilon}
\newcommand{\nnue}{\underline{\nue}}
\newcommand{\hatgi}{{\widehat{g^\varepsilon}}}
\newcommand{\hatnai}{\widehat{\na^\varepsilon}}
\newcommand{\hatbi}{\widehat{B^\varepsilon}}
\newcommand{\push}{{(\comp)_*}}
\newcommand{\weak}{\rightharpoonup}
\newcommand{\curve}{{\widehat{R^\varepsilon}}}
\newcommand{\hatri}{{\widehat{R^\varepsilon}}}
\newcommand{\two}{{\rm II}}
\newcommand{\hgai}{{\widehat{\Gamma^\varepsilon}}}
\newcommand{\xxe}{\underline{{X_\varepsilon}}}
\newcommand{\yye}{\underline{{Y_\varepsilon}}}
\newcommand{\dve}{\dd V_\euc}
\newcommand{\mres}{\mathbin{\vrule height 1.6ex depth 0pt width
0.13ex\vrule height 0.13ex depth 0pt width 1.3ex}}
\title{On Asymptotic Rigidity and Continuity Problems in Nonlinear Elasticity on Manifolds and Hypersurfaces}
\author{Gui-Qiang G. Chen}
\address{G.-Q. Chen: Mathematical Institute,\
 University of Oxford, Oxford, OX2 6GG, UK}
\email{\texttt{chengq@maths.ox.ac.uk}}
\author{Siran Li}
\address{Siran Li: School of Mathematical Sciences $\&$ Key Laboratory of Scientific and Engineering Computing (Ministry of Education), Shanghai Jiao Tong University, No.~6 Science Buildings,
800 Dongchuan Road, Minhang District, Shanghai, China (200240); and New York University -- Shanghai, Office 1146, 1555 Century Avenue, Pudong District, Shanghai, China (200122); and  NYU-ECNU Institute of Mathematical Sciences, Room 340, Geography Building, 3663 North Zhongshan Road, Shanghai, China (200062)}
\email{\texttt{siran.li@sjtu.edu.cn}}
\author{Marshall Slemrod}
\address{M. Slemrod: Department of Mathematics, University of Wisconsin, Madison, WI 53706, USA}
\email{\texttt{slemrod@math.wisc.edu}}
\keywords{Isometric immersions, nonlinear elasticity, Gauss--Codazzi equations,
 second fundamental form, elastic bodies, deformation, Cauchy--Green tensor, rigidity, intrinsic approach,
 harmonic maps.}
\subjclass[2010]{74B20, 74Q15, 53Z05, 35R01, 53C24}
\date{\today}
\begin{document}

\begin{abstract}
Intrinsic nonlinear elasticity deals with the deformations of elastic bodies as isometric immersions of Riemannian manifolds into the
Euclidean spaces (see Ciarlet \cite{ciarlet, ciarlet-new}).
In this paper, we study the rigidity and continuity properties of elastic bodies for the intrinsic approach
to nonlinear elasticity.
We first establish a geometric rigidity estimate for mappings from Riemannian manifolds to spheres
(in the spirit of Friesecke--James--M\"{u}ller \cite{fjm}),
which is the first result of this type for the non-Euclidean case as far as we know.
Then we prove the asymptotic rigidity of elastic membranes under suitable geometric conditions.
Finally, we provide a simplified geometric proof of the continuous dependence of deformations of
elastic bodies on the Cauchy--Green tensors and second fundamental forms, which extends
the Ciarlet--Mardare theorem in \cite{cm3} to arbitrary dimensions and co-dimensions.\medskip\\
\medskip
{\bf R\'{e}sum\'{e}}\\
L'\'{e}lasticit\'{e} non-lin\'{e}aire intrins\`{e}que consid\`{e}re les d\'{e}formations de corps \'{e}lastiques comme des immersions isom\'{e}triques de vari\'{e}t\'{e}s Riemanniennes dans l'espace Euclidien (voir Ciarlet \cite{ciarlet, ciarlet-new}). Dans cet article, en suivant l'approche intrins\`{e}que de l'\'{e}lasticit\'{e} non-lin\'{e}aire, nous \'{e}tudions les propri\'{e}t\'{e}s de rigidit\'{e} et de continuit\'{e} de corps \'{e}lastiques. Premi\`{e}rement, nous prouvons une estim\'{e}e de rigidit\'{e} g\'{e}om\`{e}trique pour les applications de vari\'{e}t\'{e}s Riemanniennes aux sph\`{e}res (dans l'esprit de Friesecke--James--M\"uller \cite{fjm}), qui est \`{a} notre connaissance le premier r\'{e}sultat de ce type dans le cas non-Euclidien. Ensuite, nous prouvons la rigidit\'{e} asymptotique de membranes \'{e}lastiques sous des hypoth\`{e}ses g\'{e}om\'{e}triques appropri\'{e}es. Enfin, nous donnons une preuve g\'{e}om\'{e}trique simplifi\'{e}e de la d\'{e}pendence continue des d\'{e}formations de corps \'{e}lastiques par rapport \`{a} leur tenseur de Cauchy--Green et leur seconde forme fondamentale, ce qui \'{e}tend le th\'{e}or\`{e}me de Ciarlet--Mardare \cite{cm3} en dimensions et codimensions arbitraires.
\end{abstract}
\maketitle




\section{Introduction}
\label{sec: intro}

Nonlinear elasticity has long been an important subject in mathematics, physics, and engineering.
One of the major objectives of elasticity theory is to determine the {\em deformation} undergone
by the elastic bodies in response to external forces
and boundary conditions ({\it cf}. \cite{ciarlet, cm-new}).
For an elastic body modelled as a $2$- or $3$-dimensional isometrically immersed submanifold of the Euclidean space $\R^3$,
its deformation $\Phi$ is an isometric immersion.
The {\em intrinsic approach} to nonlinear elasticity
recasts the problems concerning deformation $\Phi$ to those concerning the {\em Cauchy--Green tensor},
{\it i.e.}, the Riemannian metric determined by $\Phi$
(see Antman \cite{a}, Ciarlet \cite{ciarlet, ciarlet-new}, Ciarlet--Mardare \cite{cm2, cm2010}, and the references cited therein).

The intrinsic approach to nonlinear elasticity is based on the {\em fundamental theorem of surface theory}.
It says that, under suitable topological and regularity conditions,
deformation $\Phi$
of a surface $M$ isometrically immersed in $\R^3$ can be recovered
from the Riemannian metric $g$ ({\it i.e.}, the Cauchy--Green tensor)
and the {\em second fundamental form} $B$.
Tensor $B$ is determined by the manner in which $M$ is immersed in the ambient space $\R^3$.
In differential geometry, one says that $g$ determines the {\em intrinsic geometry} of $M$,
and $B$ the {\em extrinsic geometry}.
The literature on this topic is abundant: we refer to \cite{ciarlet, ciarlet-new, cgm, cl, cm1} for the fundamental theorem
of surface theory, to \cite{m03, m05, m07} for generalizations to surfaces with lower regularity ({\it i.e.}, $W^{2,p}$),
and to \cite{chenli,s,t} for generalizations to arbitrary dimensions and co-dimensions, among others.

Throughout this paper, {\em elastic bodies} are modelled by Riemannian manifolds isometrically immersed in the Euclidean spaces,
which are said to have lower regularity if the associated isometric immersions have only $W^{2,p}$--regularity ({\it cf}. \cite{cm3,m05,chenli}).
An {\em elastic membrane} is an elastic body with co-dimension $1$, {\it i.e.}, a $d$-dimensional hypersurface isometrically
immersed in $\R^{d+1}$. Case $d=2$ is the situation of physical relevance,
though we investigate for the general dimension case $d\ge 2$ mathematically.
More precisely, we address three problems concerning the rigidity and continuity properties of elastic bodies.

We first investigate the Riemannian analogues of the
geometric rigidity estimate due to Friesecke--James--M\"{u}ller  \cite{fjm}:
If the gradient of a vector field
on a Euclidean Lipschitz domain
is close to a Euclidean rigid motion on average, then it is close to a specific rigid motion;
see Proposition \ref{propn: geometric rigidity} below for the precise statement.
It is a quantitative version of the rigidity theorem due to Re\u{s}etnjak \cite{r, r2}.
It remains an open question whether the analogous results for mappings between Riemannian manifolds remain valid;
{\it cf.} Kupferman--Maor--Shachar \cite{kms}.
In this paper, as a first step towards the above open question,
we establish a geometric rigidity estimate
from $d$-dimensional Riemannian manifolds to spheres of dimension $d\ge 2$.
To achieve this, we exploit the Riemannian Piola identity established in \cite{ks}
and several ideas for dealing with harmonic maps to tackle the nonlinearities,
with crucial use of the special geometry of spheres.
We remark in passing that other rigidity results (\emph{e.g.},
infinitesimal rigidity and dimension reduction) have been established
in the literature; see \cite{new-lmp,new-hlp,new-lmp',new-l} and the references cited therein.

We then consider a family of elastic membranes $\{(\mi, \gi)\}_{\varepsilon>0}$
in $\R^{d+1}$ via $W^{2,p}$--isometric immersions $\{\Phi^\varepsilon\}_{\varepsilon>0}$
with bi-Lipschitz homeomorphisms $\ffi : (M,g) \map (\mi, \gi)$ for a fixed Riemannian manifold $(M,g)$.
The {\em asymptotic problem} is whether it is possible to extract
a subsequence of $\{\Phi^\varepsilon \circ \ffi\}_{\varepsilon>0}$ that converges weakly
to an isometric immersion of $(M,g)$ with the corresponding extrinsic geometries, {\it i.e.}, second fundamental forms,
provided that $\ffi$ are ``asymptotically isometric'' and $\Phi^\varepsilon$ are uniformly bounded in $W^{2,p}$.
Our second result
addresses the asymptotic problem by establishing the convergence of extrinsic
geometries under natural geometric assumptions; also see the recent work by Alpern--Kupferman--Maor \cite{new-akm}.
Our approach to the asymptotic problem is based on the weak continuity of the {\em Gauss--Codazzi equations} for isometric immersions.
The Gauss--Codazzi equations are a first-order nonlinear system of partial differential equations (PDEs)
in terms of the second fundamental form.
Its weak continuity has been established in \cite{chenli,chenli2,csw1,csw2}
by first observing its intrinsic div-curl structure and developing
a global {\em compensated compactness} approach.
Based on these developments, we answer the asymptotic rigidity problem in the affirmative,
under the assumption that $F^\varepsilon_* g- \gi$  converges to zero in suitable Sobolev norms
where $F^\varepsilon_*$ denotes  the pushforward under $\ffi$.

Finally, we analyze the question of continuous dependence  of the isometric immersion $\Phi$ in the $W^{2,p}$--norm
with respect to the $W^{1,p}$--norm of metric $g$ and the $L^p$--norm of the second fundamental form $B$.
In Ciarlet--Mardare \cite{cm3}, the continuous dependence result
for $M$ as a simply-connected, open, bounded subsets of $\R^2$ with Lipschitz boundary was obtained.
This has been further generalized to other Fr\'{e}chet topologies in \cite{new-cmm}.
We provide here a simplified geometric proof,
which also applies to isometric immersions of simply-connected Riemannian manifolds with arbitrary dimensions
and co-dimensions.
The proof is based on the arguments in our proof of the realisation theorem \cite[Theorem 5.1]{chenli},
as well as the {\em Cartan structural equations}
for isometric immersions (see \cite{c}) and the analytic lemmas due to Mardare \cite{m03, m05, m07}.

The rest of the paper is organized as follows:
In $\S \ref{sec: geometry}$, we introduce some notations and present some basics of differential geometry and non-Euclidean elasticity
needed for subsequent developments.
In $\S \ref{sec: geom rigid}$, we establish the geometric rigidity estimate theorem, Theorem \ref{thm: geom rigidity},
for mappings from $d$-dimensional Riemannian
manifolds to spheres of dimension $d\ge 2$, which extends the results of Friesecke--James--M\"{u}ller in the Euclidean spaces in \cite{fjm}.
In $\S \ref{sec: convergence}$, we establish the asymptotic rigidity of elastic bodies, Theorem \ref{thm: convergence}.
In $\S \ref{sec: continuous dep}$, we provide a simplified proof of the continuous dependence of the deformations on the Cauchy--Green tensor
and extrinsic geometry, Theorem \ref{thm: continuous dependence}.

\section{Geometry of Elastic Bodies and Curvatures}
\label{sec: geometry}

In this section, we introduce some notations and present some basics of differential geometry
and non-Euclidean elasticity needed for subsequent development.

\subsection{Riemannian Submanifold Theory}\label{subsec: geom prelim}
Let $(M,g)$ and $\{(\mi, \gi)\}_{\varepsilon>0}$ be Riemannian manifolds, {\it a.k.a.} elastic bodies.
Let $\ffi: M \map \mi$ be a bi-Lipschitz homeomorphism for each $\varepsilon>0$. The pushforward $F^\varepsilon_* g$ defines another metric on $\mi$:
\begin{equation*}
F^\varepsilon_* g(\xe,\ye) := g(\ffiu \xe, \ffiu \ye) \qquad \text{ for $\xe,\ye \in \G(T\mi)$},
\end{equation*}
where $\ffiu \xe:=\dd(\ffi)^{-1}\xe$ is the pullback vector field of $\xe$ which is
well-defined as $\ffi$ is a bi-Lipschitz homeomorphism (see also \S 2.2 below),
$T\mi$ is the tangent bundle of $\mi$, and $\G(T\mi)$ is the space of tangential
vector fields to $\mi$ (similar for $M$).

Let $\na$ and $\nai$ be the Levi-Civita connections on $(M,g)$ and $(\mi, \gi)$, respectively.
Then $F^\varepsilon_*\na$ defines another affine connection on $\mi$, known as the pushforward connection:
\begin{equation*}
[F^\varepsilon_*\na]_\xe \ye := \na_{\ffiu \xe} \,(\ffiu \ye)  \qquad \text{ for $\xe,\ye \in \G(T\mi)$}.
\end{equation*}
Unless $\ffi$ is an isometry,  $F^\varepsilon_*\na$ and $\gi$ are unrelated in general.

Consider an isometric immersion $\Phi^\varepsilon : \mi \map \R^{D}$, where $\dim \mi=\dim M = d$,
and $\R^{D}$ is equipped with the Euclidean metric $\euc$.
By definition, differential $\dd\Phi^\varepsilon$ is everywhere injective, and $(\Phi^\varepsilon)^*\euc = \gi$.
It defines the second fundamental form:
$$
B^\varepsilon: \G(T\mi) \times \G(T\mi) \map \G((T\mi)^\perp),
$$
where $(T\mi)^\perp$ is the normal bundle of $\Phi^\varepsilon$:
\begin{equation*}
(T\mi)^\perp := T\R^D/T(\phii(\mi)).
\end{equation*}
That is, $(T\mi)^\perp$ is the quotient bundle of two tangent bundles --- for each $x\in\mi$,
its fiber $(T_x\mi)^\perp$ is the quotient vector space $T_{\phii(x)}\R^D \slash T_{\phii(x)}(\phii(\mi))$.
More precisely, if $\barna$ denotes the Levi-Civita connection on $\R^D$, then
\begin{equation}\label{2.1a}
B^\varepsilon (\xe,\ye) := \barna_{\dd\phii(\xe)}{\dd\phii(\ye)} - \nai_\xe \ye \qquad \text{ for } \xe,\ye \in \G(T\mi).
\end{equation}
The right-hand side of \eqref{2.1a} can be understood as follows: we can locally extend $\dd\phii(\xe)$ and $\dd\phii(\ye)$,
which are tangential vector fields on $\comp(M)\subset \R^D$, to vector fields $X'_\varepsilon$ and $Y'_\varepsilon$ on $\R^D$, respectively,
and then set
$$
\bi(\xe,\ye):= \barna_{X'_\varepsilon} Y'_\varepsilon-\nai_{X'_\varepsilon}Y'_\varepsilon.
$$
This is independent of the choice of extensions $X'_\varepsilon$ and $Y'_\varepsilon$; see do Carmo \cite[pp.126--127, \S 6]{doc}.

A compatibility condition for the isometric immersion $\Phi^\varepsilon: \mi \map \R^{D}$
is the Gauss--Codazzi equations (GCE).
They arise from the compatibility of curvatures: the flat curvature of $\R^{D}$
decomposes along $T\mi$ and $(T\mi)^\perp$.
Denote by $R^\varepsilon: \G((TM^\varepsilon)^{\otimes 4})\map\R$ the Riemann curvature tensor of $(\mi,\gi,\nai)$.
The Gauss equation reads
\begin{equation}\label{gauss}
\mathfrak{e}(B^\varepsilon(\xe,\ze), B^\varepsilon(\ye,\we))
-\mathfrak{e}(B^\varepsilon(\xe,\we), B^\varepsilon(\ye,\ze)) = R^\varepsilon(\xe,\ye,\ze,\we),
\end{equation}
and the Codazzi equation reads
\begin{equation}\label{codazzi}
\barna_{\dd\phii(\xe)} B^\varepsilon(\ye,\ze) = \barna_{\dd\phii(\ye)} B^\varepsilon(\xe,\ze),
\end{equation}
for $\xe,\ye,\ze,\we \in \G(T\mi)$.

If $\mi \emb \R^{D}$ are not hypersurfaces, {\it i.e.},
the normal bundle $(T\mi)^\perp$ has rank greater than $1$, then there is an additional compatibility equation named after Ricci:
\begin{equation}\label{ricci}
\gi( [S^\varepsilon_{\xi_\varepsilon}, S^\varepsilon_{\eta_\varepsilon}]\xe,\ye) = R^{\varepsilon,\perp} (\xe,\ye,\eta_\varepsilon,\xi_\varepsilon).
\end{equation}
Here $S^\varepsilon$ is the shape operator of $B^\varepsilon$ (which is equivalent to $B^\varepsilon$) and $R^{\varepsilon,\perp}$ is the Riemann curvature
of bundle $(T\mi)^\perp$ with respect to $(\gi, \nai)$:
\begin{align*}
&R^{\varepsilon,\perp}(\xe,\ye,\eta_\varepsilon,\xi_\varepsilon)\\
&:= \mathfrak{e}
(\nabla^{\varepsilon,\perp}_{\dd\phii(\xe)}\nabla^{\varepsilon,\perp}_{\dd\phii(\ye)} \eta_\varepsilon
- \nabla^{\varepsilon,\perp}_{\dd\phii(\ye)}\nabla^{\varepsilon,\perp}_{\dd\phii(\xe)} \eta_\varepsilon
+\nabla^{\varepsilon,\perp}_{[\dd\phii(\xe),\dd\phii(\ye)]} \eta_\varepsilon,\,\xi_\varepsilon).
\end{align*}
Connection $\nabla^{\varepsilon,\perp}$ is the orthogonal projection of $\barna$ onto $(T\mi)^\perp$.
The vector fields $\xe,\ye \in \G(T\mi)$ and $\eta_\varepsilon,\xi_\varepsilon \in \G((T\mi)^\perp)$ are arbitrary.
In the weak regularity case that $\Phi^\varepsilon \in W^{2,p}$,
solutions of \eqref{gauss}--\eqref{ricci}
are understood in the sense of distributions,
and the injectivity of $\dd\Phi^\varepsilon$ is defined for a continuous representative
in the Sobolev class.

Recognizing the underlying ``div-curl structure''
and developing the geometric compensated compactness argument,
we have established a weak continuity theorem
of the Gauss--Codazzi--Ricci equations (GCRE) \eqref{gauss}--\eqref{ricci} in \cite{chenli} (also see \cite{csw2}):

\begin{proposition}[{\cite[Theorem $4.1$]{chenli}}]\label{propn: chenli, weak continuity thm for GCR eq}
Let $M$ be a Riemannian manifold with $W^{1,p}\cap L^\infty$--metric $g$ for $p>2$.
Assume that the second fundamental forms and normal connections $\{(B^\e, \na^{\e,\perp})\}_{\e>0}$
are solutions of the Gauss--Codazzi--Ricci equations \eqref{gauss}--\eqref{ricci}
and have a uniform $L^p_\loc$--bound
on the Riemannian manifold $(M,g)$.
Then, after passing to a subsequence if necessary,
$(B^\e, \na^{\e,\perp})$ converges weakly in $L^p_\loc$ to a solution
of the Gauss--Codazzi--Ricci equations \eqref{gauss}--\eqref{ricci}.
\end{proposition}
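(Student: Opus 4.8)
The plan is to localise the statement, use the uniform $L^p_{\loc}$ bound to extract a weakly convergent subsequence, and then promote weak convergence of $(B^\e,\na^{\e,\perp})$ to convergence of the quadratic quantities appearing in the Gauss and Ricci equations by the geometric compensated compactness (div--curl) method of \cite{chenli}; the Codazzi equations and all terms linear in $B^\e$ or in the normal connection forms pass to the limit directly.

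First I would cover $M$ by coordinate charts $U$, each carrying a smooth $g$-orthonormal frame $\{e_1,\dots,e_d\}$ for $TM$ and (since only a subsequence is sought) local orthonormal frames of the normal bundle in which, by hypothesis, the connection $1$-forms of $\na^{\e,\perp}$ are bounded in $L^p_{\loc}$. In such frames $B^\e$ and $\na^{\e,\perp}$ become $L^p_{\loc}$-bounded families of scalar functions, so Banach--Alaoglu yields a subsequence along which every component converges weakly in $L^p_{\loc}$; it remains to identify the weak limits of the products occurring in \eqref{gauss} and \eqref{ricci}. The Codazzi equations \eqref{codazzi} and the linear-in-$B^\e$ terms pass to the limit because they are, at worst, products of $B^\e$ with the fixed Christoffel symbols of $g$, which lie in $L^p_{\loc}$ and hence, on compact sets, in $L^{p'}_{\loc}$ since $p>2=d$.

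Second, I would exhibit the div--curl structure. The Codazzi equations say precisely that $B^\e$, viewed through $\na$ as a $T^*M$-valued $1$-form, is closed modulo terms linear in $B^\e$ with $L^p_{\loc}$ coefficients; hence the relevant exterior derivatives are bounded in $L^s_{\loc}$ for some $s>1$ — this is the ``curl-controlled'' input. For the ``divergence-controlled'' partner one applies a Piola/cofactor-type identity (in the spirit of the Riemannian Piola identity of \cite{ks}) to the shape operator of $B^\e$, producing a tensor whose covariant divergence is again controlled in $L^s_{\loc}$. The key algebraic observation is that the antisymmetrised Gauss form $\euc(B^\e(\xe,\ze),B^\e(\ye,\we))-\euc(B^\e(\xe,\we),B^\e(\ye,\ze))$ and the Ricci form $\gi([S^\e_{\xi_\e},S^\e_{\eta_\e}]\xe,\ye)$ can then each be rewritten, component by component, as a natural bilinear pairing of such a curl-controlled quantity with such a divergence-controlled one; for the right-hand side $R^{\e,\perp}$ of \eqref{ricci} one uses instead the usual ``$\dd\omega+\omega\wedge\omega$'' curvature structure of the normal connection forms, which is amenable to the same scheme.

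Third, I would invoke the div--curl lemma on the manifold: if $\alpha^\e\weak\alpha$ and $\beta^\e\weak\beta$ in $L^p_{\loc}$ with $p\ge2$, while $\dd\alpha^\e$ and $\delta\beta^\e$ are bounded in $L^s_{\loc}$ for some $s>1$ — hence, by Rellich--Kondrachov, precompact in $W^{-1,r}_{\loc}$ for some $r>1$ — then the pointwise pairing $\langle\alpha^\e,\beta^\e\rangle$ converges to $\langle\alpha,\beta\rangle$ in $\mathcal{D}'$. Since $p>2=d$, the integrability requirements hold with room to spare. Feeding the pairs constructed in the previous step into this lemma shows that the Gauss and Ricci expressions built from $(B^\e,\na^{\e,\perp})$ converge distributionally to the same expressions built from the weak limit $(B,\na^\perp)$; combined with the trivial weak-limit passage of \eqref{codazzi}, the limit $(B,\na^\perp)$ solves \eqref{gauss}--\eqref{ricci} on $U$. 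Reassembling the local conclusions with a partition of unity finishes the proof. The main obstacle is the second step: constructing, out of $B^\e$ alone, a divergence-controlled tensor whose bilinear pairing with $B^\e$ reproduces \emph{exactly} the antisymmetrised Gauss quadratic form (and likewise for Ricci). This is the only place where the special algebraic symmetries of the second fundamental form and the Piola-type identity are genuinely needed — soft functional analysis handles everything else. A secondary technical nuisance is the low regularity of $g$: its Christoffel symbols are merely $L^p_{\loc}$ (one saves just enough from $p>d$), so one must keep careful track of which products are classically meaningful and which are interpreted distributionally, and the normal-frame conventions must be taken uniformly in $\e$ so that the curvature term $R^{\e,\perp}$ also falls within the div--curl framework.
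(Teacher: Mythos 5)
Your overall framework matches the approach of \cite{chenli}: localize, extract $L^p_{\loc}$-weak limits, observe that the Codazzi equations and all terms linear in $(B^\e,\na^{\e,\perp})$ pass to the limit directly, and handle the quadratic forms in the Gauss and Ricci equations by a div--curl (compensated-compactness) argument, for which $p>2$ is exactly the right integrability threshold.

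However, the mechanism you propose for producing the ``divergence-controlled'' partner is not the one used in \cite{chenli}, and you leave it unverified. In \cite{chenli} (following \cite{csw1,csw2}), \emph{both} the curl and divergence controls on $B^\e$ are drawn from the Codazzi system itself by different index contractions: the identity $\na_i B^\e_{jk}=\na_j B^\e_{ik}+\text{l.o.t.}$ gives the curl control you mention, and tracing $j,k$ against $g$ yields $\na^j B^\e_{ij}=\na_i(\mathrm{tr}_g B^\e)+\text{l.o.t.}$, which is precisely a covariant-divergence control on (rows of) $B^\e$. No Piola/cofactor identity enters. The Piola route you sketch --- applying $\delta(\cof\,\cdot)$ to the shape operator $S^\e$ --- would at best control a divergence whose pairing with $B^\e$ produces the full determinant of $S^\e$; but the Gauss form involves $2\times 2$ minors $B^\e_{ik}B^\e_{jl}-B^\e_{il}B^\e_{jk}$, which coincide with $\det S^\e$ only when $d=2$. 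For general $d$ your pairing does not reproduce the required quadratic form, so the step you yourself flag as ``the main obstacle'' is a genuine gap rather than a technicality. A secondary slip: you write ``$p>2=d$'', but the proposition is stated for $M$ of arbitrary dimension with $p>2$; the exponent restriction is tied to making $L^p\cdot L^p\subset L^{p/2}\subset L^1_{\loc}$ and the compactness of the constraints in $W^{-1,s}_{\loc}$, not to $d=2$. For the Ricci part you correctly note the $\dd\omega+\omega\wedge\omega$ structure of $R^{\e,\perp}$, but again the matching divergence constraint must come from the Ricci/Codazzi system, not from an auxiliary Piola identity.
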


Its geometric analogue is the weak rigidity theorem of isometric immersions below.
{Throughout this paper,  an immersion $\Phi$ is understood in the sense that
$D\Phi$ has nonzero determinant {\it a.e.}. Note that $D\Phi$ is well-defined {\it a.e.}
for $\Phi\in W^{2,p}$}.

\begin{proposition}[{\cite[Corollary $5.2$]{chenli}}]\label{propn: chenli, weak rigid thm of isom imm}
Let $M$ be a $d$-dimensional simply-connected Riemannian manifold with $W^{1,p}$--metric $g$ for $p>d$.
Assume that $\{\Phi^\e\}_{\e>0}$ is a family of isometric immersions of $(M,g)$ into a Euclidean space, uniformly bounded in $W^{2,p}_\loc$,
whose second fundamental forms and normal connections are $(B^\e, \na^{\e,\perp})$.
Then, after passing to a subsequence if necessary, $\Phi^\e$ converges weakly in $W^{2,p}_\loc$
to an isometric immersion $\Phi$ of $(M,g)$ whose second fundamental form and normal connection are the weak $L^p_\loc$--limits
of $(B^\e,\na^{\e,\perp})$,
obeying the Gauss--Codazzi--Ricci equations \eqref{gauss}--\eqref{ricci}.
\end{proposition}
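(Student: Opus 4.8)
The plan is to deduce this corollary from the weak continuity of the Gauss--Codazzi--Ricci equations (Proposition \ref{propn: chenli, weak continuity thm for GCR eq}) together with the fundamental theorem of submanifold theory, realising the immersions via their first and second fundamental forms. First I would record the a priori bounds: since $\{\Phi^\e\}_{\e>0}$ is uniformly bounded in $W^{2,p}_\loc$, the second fundamental forms $B^\e$ and normal connection coefficients $\na^{\e,\perp}$ — being built from first and second derivatives of $\Phi^\e$ and the metric $g\in W^{1,p}$ — are uniformly bounded in $L^p_\loc$ (here $p>d\ge 2$, so the Sobolev embedding $W^{2,p}\emb C^{1,\alpha}_\loc$ keeps $D\Phi^\e$ equicontinuous, which also controls the normal frame and the splitting $T\R^D = T\mi\oplus (T\mi)^\perp$). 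By reflexivity of $W^{2,p}_\loc$ and $L^p_\loc$, after passing to a subsequence we may assume $\Phi^\e\weak\Phi$ weakly in $W^{2,p}_\loc$ (hence strongly in $C^1_\loc$ by Rellich) and $(B^\e,\na^{\e,\perp})\weak(B,A)$ weakly in $L^p_\loc$.

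Next I would identify the limit. Each $(B^\e,\na^{\e,\perp})$ satisfies the GCRE \eqref{gauss}--\eqref{ricci} with the \emph{fixed} intrinsic data $(M,g)$ (the Riemann tensor $R^\e = R_g$ is the same for all $\e$, since all $\Phi^\e$ are isometric immersions of the same $(M,g)$). Proposition \ref{propn: chenli, weak continuity thm for GCR eq} then applies verbatim and gives that the weak limit $(B,A)$ is again a distributional solution of \eqref{gauss}--\eqref{ricci} over $(M,g)$. It remains to check that $(B,A)$ is realised by the limiting map $\Phi$ and that $\Phi$ is an isometric immersion. Since $\Phi^\e\to\Phi$ in $C^1_\loc$, the pullback metrics converge: $(\Phi^\e)^*\euc = g \to \Phi^*\euc$ uniformly, so $\Phi^*\euc = g$; thus $\Phi$ is an isometric immersion (the nondegeneracy of $D\Phi$ follows because $D(\Phi^\e)^\top D\Phi^\e = g$ is positive definite and passes to the $C^0$-limit). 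To see that the second fundamental form of $\Phi$ equals $B$, I would use the $C^1_\loc$-convergence of $\Phi^\e$ to pass to the limit in the defining relation \eqref{2.1a}: the tangential part of $\barna_{\dd\Phi^\e(\xe)}\dd\Phi^\e(\ye)$ converges distributionally to that of $\Phi$, its normal part is $B^\e(\xe,\ye)\weak B(\xe,\ye)$, and similarly the normal connection of $\Phi$ is the weak limit $A$. Combining, $(B,A)$ is exactly the extrinsic data of $\Phi$, and it obeys the GCRE as required.

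The main obstacle is the passage to the limit in the \emph{nonlinear} extrinsic quantities — this is precisely where the naive weak convergence $B^\e\weak B$ is not obviously enough to conclude that $B$ is the second fundamental form of $\Phi$ rather than of some other competitor. Two features rescue this. First, the compactness in $\e$ really comes from the \emph{intrinsic} side: because $p>d$, the Sobolev embedding upgrades weak $W^{2,p}_\loc$ convergence to strong $C^1_\loc$ convergence of $\Phi^\e$, which makes the frame fields, the orthogonal projections onto $T\mi$ and $(T\mi)^\perp$, and the Christoffel symbols of $g$ all converge strongly — so the only genuinely ``weak'' objects left are the second derivatives, packaged exactly as $(B^\e,\na^{\e,\perp})$. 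Second, the weak continuity of GCRE (Proposition \ref{propn: chenli, weak continuity thm for GCR eq}) is what guarantees the limit data are compatible, so that there is \emph{no obstruction} — via the fundamental theorem of submanifold theory on the simply-connected $(M,g)$ — to the limit $(B,A)$ being realised, and the already-constructed $C^1_\loc$-limit $\Phi$ is that realisation (unique up to rigid motions, which we may normalise away). Care must be taken that distributional derivatives commute with the weak limits and that the continuous representative of $D\Phi$ has nonvanishing determinant a.e.; both follow from the $C^1_\loc$-convergence and the positive-definiteness of $g$.
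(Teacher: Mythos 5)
Your proposal is correct and follows the same route as the cited reference (the paper itself only quotes [Corollary 5.2 of chenli] without reproducing the proof): extract weak $W^{2,p}_\loc$ and $L^p_\loc$ limits, invoke Proposition~\ref{propn: chenli, weak continuity thm for GCR eq} for the weak continuity of the GCRE, and use the Sobolev--Morrey embedding $W^{2,p}_\loc\emb C^{1}_\loc$ for $p>d$ to upgrade $\Phi^\e\weak\Phi$ to strong $C^1_\loc$ convergence, so that $\Phi^*\euc=g$, $D\Phi$ is nondegenerate a.e., and the weak limit of $B^\e=\partial^2\Phi^\e-\Gamma(g)\star\partial\Phi^\e$ (and likewise the normal-connection coefficients) coincides with the extrinsic data of the limiting immersion $\Phi$. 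One small observation: since all $\Phi^\e$ pull back to the \emph{same} $g$, the Christoffel symbols are fixed, which is exactly what makes the product structure in $B^\e$ pass to the limit; and once $(B,A)$ are identified as the extrinsic data of a $W^{2,p}_\loc$ isometric immersion, the GCRE hold automatically, so your separate invocation of Proposition~\ref{propn: chenli, weak continuity thm for GCR eq} is sound but strictly speaking redundant.
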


Propositions \ref{propn: chenli, weak continuity thm for GCR eq}--\ref{propn: chenli, weak rigid thm of isom imm}
serve as a starting point for $\S\ref{sec: convergence}$--$\S\ref{sec: continuous dep}$.
In the proof of the asymptotic rigidity theorem, Theorem \ref{thm: convergence} below,
we employ a variant of
Proposition \ref{propn: chenli, weak continuity thm for GCR eq}.
Moreover, the continuous dependence theorem, Theorem \ref{thm: continuous dependence}, enables us
to obtain a stronger version of  Proposition \ref{propn: chenli, weak rigid thm of isom imm}.

We also introduce two notational conventions.
First, the Einstein summation convention is used throughout: the repeated lower and upper indices are understood to be summed over.
Second, for two tensor fields $S$ and $S'$ on the same manifold, $S \star S'$ denotes a generic linear combination
of quadratic expressions for the components of $S$ and $S'$.

\subsection{Vector Bundles}\label{subsec: vector bdls}
Let $f: (M,g) \map (\tM, \tg)$ be a mapping between two Riemannian manifolds.
Then its differential $\dd f: TM \map T\tM$ can be viewed as a section of the vector bundle $T^*M \otimes f^*T\tM \cong \Hom(TM;f^*T\tM)$.
Here and hereafter, for a vector bundle $E$ over manifold $\tM$, $f^*E$ is the {\em pull-back bundle} over $M$.

{
In the above setting, we define as in \cite[\S 1.1, p.369]{kms}:
\begin{align}\label{new_SO,def}
\so(g,\tg) := \Big\{ \xi\,:\, &\,TM \map T\tM \,\,\, \text{such that, for each $x\in M$, } \nonumber\\
        &\,\,\xi_x:T_xM\to T_{\xi'(x)}\tM \text{ is an  orientation-preserving} \nonumber\\
        &\,\text{ isometry with respect to $(g, \tg)$} \Big\},
\end{align}
where $\xi':M\to\tM$ is the map between the base points associated to $\xi$.
That is, for $\xi(F_1)=F_2$,
\begin{equation}\label{new_Q'}
\xi'(\pi_M(F_1)) := \pi_{\tM}(F_2),
\end{equation}
where $F_1 \in TM$ and $F_2\in T\tM$ are fibers, and $\pi_M: TM \to M$ and $\pi_{\tM}:T\tM \to \tM$ are natural
projections onto the base points.
}

{As a side remark, given $f: M\to\tM$,  $\dd f \in \so(g,\tg)$ is systematically written
as $\dd f \in \so(g;f^*\tg)$ in \cite{kms}.
However, we adhere to the notation, $\so(g,\tg)$, which will be more convenient
for our purpose (in particular, for the formulation of Theorem~\ref{thm: geom rigidity}).
}

In $\S \ref{sec: geom rigid}$ below, we are interested in the distance between a given matrix field over $M$ and $\so(g,\tg)$.
More precisely, for $Q \in \G(T^*M \otimes T\tM)$ (that is, for each $x \in M$, $Q(x)$ is a linear
homomorphism, {\it i.e.}, a $(d\times d)$--matrix, from $T_xM$ to $T_{Q'(x)}\tM$), we consider the map on $M$:
\begin{equation}
\dist(Q, \so(g,\tg))\,:\,\,\, x \,\longmapsto \, \dist(Q'(x), \so(g,\tg)|_{x; Q}),
\end{equation}
where $Q'$ is as in \eqref{new_Q'} and
\begin{align*}
\so(g,\tg)|_{x; Q} :=\big\{S:\, &\, T_xM\to T_{Q'(x)}\tM \,\,\, \text{is orientation-preserving}\\
  &\, \text{ with $S^*(\tg|_{Q'(x)}) = g|_x$}\big\}.
\end{align*}

We need some more notations: Let $E$ be a vector bundle over $(M,g)$ with
the bundle metric $g^E$.
The natural metric on $\G(T^*M\otimes E) \equiv \Omega^1(M,E)$ is the product metric of $g$ and $g^E$, denoted by $g\otimes g^E$.
For notational convenience, we sometimes write $[g\otimes g^E](\bullet,\bullet)\equiv \langle\bullet,\bullet\rangle_{g\otimes g^E}$,
and similarly for the other metrics.
Moreover, $\G_0(E)$ denotes the space of sections $\sigma: M \map E$ such that $\sigma|_{\p M}=0$,
and $\G_{\rm c}(E)$ denotes the space of compactly supported sections. For a manifold ${M}$,
we reserve symbol $\na^{{M}}$ for the Levi-Civita connection on ${M}$.

\section{Geometric Rigidity for Mappings from Riemannian Manifolds to Spheres}
\label{sec: geom rigid}

A geometric rigidity estimate was first established in Friesecke--James--M\"{u}ller \cite[\S 4]{fjm} as stated below.
Roughly speaking, if the gradient
of a vector field $v$ is close to a Euclidean rigid motion on average, then it is indeed close to a specific rigid motion.

\begin{proposition}[\cite{fjm},
Theorem~3.1 and the ensuing comment]\label{propn: geometric rigidity}
Let $\Omega \subset \R^d$ be a bounded Lipschitz domain with $d\geq 2$, and let $1<p<\infty$.
Then there is $C=C(\Omega,p)>0$ so that,
for each $v \in W^{1,p}(\Omega,\R^d)$, there exists $\RR \in \so(d)$ such that
\begin{equation}\label{3.1a}
\|\na v - \RR\|_{L^p(\Omega)} \leq C\, \|\dist(\na v, \so(d))\|_{L^p(\Omega)},
\end{equation}
where $\so(d)$ is the group of all rotations about the origin in $\R^d$ under the operation of composition.
\end{proposition}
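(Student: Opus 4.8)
The plan is to reproduce the argument of Friesecke--James--M\"{u}ller. \textbf{First}, I would note that both sides of \eqref{3.1a} transform the same way under $v(x)\mapsto \lambda^{-1}v(x_0+\lambda x)$, so it suffices to treat the unit ball $B_1$ with a purely dimensional constant $C=C(d,p)$ and then recover a general bounded Lipschitz domain by patching (last step). I would also dispose of the ``large defect'' case immediately: since $|\RR'-I|\le 2\sqrt{d}$ for every $\RR'\in\so(d)$, choosing $\RR=I$ already gives \eqref{3.1a} with $C=3$ whenever the defect $\e_0:=\|\dist(\na v,\so(d))\|_{L^p(B_1)}$ exceeds a fixed dimensional threshold; so I may assume $\e_0$ small.

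\textbf{The analytic core} is the small-defect estimate on $B_1$, and this is where the nonlinearity of $\so(d)$ must be confronted. I would (i) apply an $L^p$ Lipschitz truncation to replace $v$ by $z\in W^{1,\infty}$ agreeing with $v$ off a set $\omega$ of small measure, with $\|\na z\|_{L^\infty}$ bounded dimensionally and $\|\na v-\na z\|_{L^p(B_1)}+|\omega|^{1/p}\le C\e_0$ --- which is possible precisely because $\na v$ lies within distance $\dist(\na v,\so(d))$ of the \emph{compact} set $\so(d)$, hence is bounded outside a set of measure controlled by the defect; (ii) mollify $z$ at unit scale to obtain $u$ smooth on $B_{1/2}$ with bounded gradient and $\|\na u-\na z\|_{L^p(B_{1/2})}$ still $O(\e_0)$; (iii) exploit the two differential identities satisfied by $u$, namely $\operatorname{curl}\na u=0$ and the \emph{Piola identity} $\operatorname{div}(\cof\na u)=0$ (the Euclidean model of the Riemannian identity of \cite{ks}). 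Writing $\na u=\RR_0(I+H)$ for a fixed $\RR_0\in\so(d)$ and decomposing $H=H_{\mathrm{sym}}+H_{\mathrm{skew}}$, the bound $\dist(\na u,\so(d))=O(|H_{\mathrm{sym}}|+|H|^2)$, together with $\cof\RR'=\RR'$ on $\so(d)$ and the two identities (which force the rotation part of $\na u$ to be approximately curl-free \emph{and} divergence-free, hence approximately harmonic), reduces matters to a \emph{linear} Korn-type estimate for $H$ --- equivalently, interior elliptic estimates for a harmonic displacement --- the quadratic terms being absorbed since $|H|$ is small. This yields a constant skew $A$ with $\|\na u-\RR_0(I+A)\|_{L^p(B_{1/4})}\le C\e_0$; setting $\RR:=\RR_0\exp(A)\in\so(d)$ and collecting the truncation and mollification errors gives \eqref{3.1a} on $B_{1/4}$, hence on $B_1$ by a finite covering/iteration. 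The passage from $p=2$ to general $1<p<\infty$ needs no new idea: one runs the above directly in $L^p$ using the $L^p$ truncation and $L^p$ Korn estimates, or proves $p=2$ first and upgrades by again splitting off the small bad set.

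\textbf{Finally}, I would patch. Cover $\Omega$ by finitely many balls $B_i\Subset\Omega$ (possible since $\Omega$ is Lipschitz, hence a John domain) with controlled overlaps, and let $\RR_i\in\so(d)$ be the rotation produced on $B_i$. On each nonempty overlap, $|\RR_i-\RR_j|\le C|B_i\cap B_j|^{-1/p}\bigl(\|\na v-\RR_i\|_{L^p(B_i)}+\|\na v-\RR_j\|_{L^p(B_j)}\bigr)$; chaining along the connected nerve of the cover, together with $\sum_i\|\dist(\na v,\so(d))\|_{L^p(B_i)}^p\le C(\Omega)\|\dist(\na v,\so(d))\|_{L^p(\Omega)}^p$, produces a single $\RR$ with $\|\na v-\RR\|_{L^p(\Omega)}\le C(\Omega,p)\|\dist(\na v,\so(d))\|_{L^p(\Omega)}$.

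\textbf{The main obstacle} is that $\so(d)$ is a compact nonlinear manifold rather than a linear subspace: one can neither mollify against it via Jensen's inequality nor apply Korn's inequality directly. The two devices that overcome this are the restriction to the small-defect regime, which makes the linearization of $\dist(\cdot,\so(d))$ about $\so(d)$ contribute only quadratic, absorbable errors, and the Piola identity, which supplies the extra divergence constraint --- beyond curl-freeness --- needed to pin $\na u$ down to a \emph{single} rotation rather than to a merely curl-free $\so(d)$-valued field. It is exactly the Riemannian counterpart of this identity, established in \cite{ks}, that makes the manifold-to-sphere version in Theorem~\ref{thm: geom rigidity} attainable.
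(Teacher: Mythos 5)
The paper does not prove Proposition~\ref{propn: geometric rigidity} at all; it is quoted verbatim from Friesecke--James--M\"{u}ller \cite{fjm} and used as background for the Riemannian extension proved as Theorem~\ref{thm: geom rigidity}. Your reconstruction is, at the level of an outline, precisely the argument of \cite{fjm}: rescale and dispose of the large-defect regime; Lipschitz-truncate so that $\na v$ is uniformly bounded; invoke the Piola identity $\operatorname{div}(\cof\na u)=0$ together with $\operatorname{curl}\na u=0$ to split off a harmonic part and control the remainder by $\|\cof\na u-\na u\|_{L^p}\lesssim\|\dist(\na u,\so(d))\|_{L^p}$; use interior regularity for the harmonic part; patch over a finite cover. (The explicit mollification step you insert is not in \cite{fjm} --- they pass directly from the truncated map to the harmonic decomposition --- but it is a harmless variant, and the $p\neq2$ extension is, as you say, a matter of $L^p$ truncation rather than a new idea.) It is worth noting that the paper's actual contribution, the proof of Theorem~\ref{thm: geom rigidity} in \S3.2, follows exactly the skeleton you describe: the decomposition $f=w+z$ of \eqref{z equation}--\eqref{w equation} is the Riemannian analogue of the harmonic replacement, with the Euclidean Piola identity replaced by \eqref{piola identity, weak} and harmonic functions replaced by (perturbed) harmonic maps into $\snu$ --- which is why the higher regularity hypotheses and the vanishing boundary condition become necessary there.
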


The above result can be viewed as a quantitative version of the Re\u{s}etnjak rigidity theorem
in nonlinear elasticity in \cite{r, r2}.
In the recent work by Kupferman--Maor--Shachar \cite{kms},
the Re\u{s}etnjak rigidity theorem was generalized to
Lipschitz maps
{$f: M \map \tM$} between Riemannian manifolds.
It is raised as an open question in \cite{kms} whether the quantitative inequality \eqref{3.1a}
admits generalizations
to mappings (not necessarily Lipschitz) between Riemannian manifolds.

The difficulty of the above question lies in the nonlinearity induced by the geometry of Riemannian manifolds.
Indeed, the proof of Proposition \ref{propn: geometric rigidity} by Friesecke--James--M\"{u}ller \cite{fjm}
relies essentially on the flat geometry of $\R^d$, especially the interior {\it a priori} estimates for harmonic functions on $\R^d$.
In contrast, in the Riemannian setting, the role of harmonic functions is, roughly speaking,
played by {\em harmonic maps}.
The PDEs for harmonic maps are a quasilinear elliptic system,
for which the desired regularity theory and {\it a priori}
estimates are largely missing.

In this section, we present a
{variant} of Proposition \ref{propn: geometric rigidity} for
mappings $f: M\map \tM$
{under the restrictions both that $p=2$ and that $f$ satisfies the higher regularity assumptions}.
Throughout this section, $M$ is a Riemannian manifold, and $\snu=\{x=(x^1,\cdots, x^{d+1})\in \R^{d+1}\,:\, |x|=1\}$
is the unit $d$-sphere equipped with the {\em round metric can} parameterized
by $d$-angles $(\phi_1, \cdots, \phi_d)$:
\begin{equation*}
{\rm can} = \dd\phi_1 \otimes \dd\phi_1 + \sum_{i=2}^d \big(\prod_{j=1}^{i-1} \sin^2\phi_j \big) \dd\phi_i\otimes \dd\phi_i,
\end{equation*}
so that $x\in \snu$ is represented by
\begin{align*}
x^1= \cos\phi_1,\quad\,
x^j = \cos\phi_j\prod_{i=1}^{j-1} \sin\phi_i\,\,\,\, \text{for $2\leq j \leq d$}, \quad\,
x^{d+1} = \prod_{i=1}^d \sin\phi_i.
\end{align*}
The key ingredients of the proof include the specific structures of $\snu$-valued harmonic map equations (see \cite{es, hw}),
as well as the Riemannian Piola identity established by Kupferman--Maor--Shachar (see \cite[Theorem $2$]{kms} and \cite{ks}).

{Here and hereafter in this section, an element $\RR$ in $\so(g,\can)$ is said to be a rigid motion
if it equals to a constant orthogonal $(d+1)\times (d+1)$-matrix restricted onto $\snu\subset \R^{d+1}$.}

\begin{theorem}\label{thm: geom rigidity}
Let $d \in \mathbb{Z}_{\geq 2}$ $($i.e., $d\ge 2$ integers$)$, and let $\mathbf{n}(d)$ be any number strictly greater than $\frac{d}{2}$.
Let $(M,g)$ be a Riemannian manifold that can be $C^{{\mathbf{n}(d)}+2}$-isometrically immersed into
the round sphere $(\snu,can)$, with possibly non-empty boundary $\p M$. Then, for each $f\in W^{\mathbf{n}(d)+1,\infty}_0(M,g; \snu,\can)$, there exists a rigid motion $\RR$ such that
\begin{equation}\label{3.1b}
\|\dd f - \RR\|_{L^2(M,g;\,\snu,\can)} \leq C \|\dist(\dd f, SO(g, \can))\|_{L^2(M,g)},
\end{equation}
where $C>0$ depends only on $\|f\|_{W^{\mathbf{n}(d)+1,\infty}(M,g;\,\snu,\can)}$ and the $C^{\mathbf{n}(d)+2}$-geometry of $(M,g)$.
\end{theorem}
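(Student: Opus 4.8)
The plan is to reduce the problem to a genuinely Euclidean rigidity estimate by exploiting the isometric immersion $\iota:(M,g)\hookrightarrow(\snu,\can)\subset\R^{d+1}$ together with the Riemannian Piola identity, and then to control the resulting error terms by using the harmonic-map structure together with the smallness quantified by $\dist(\dd f,\so(g,\can))$. First I would set up the linear-algebraic comparison: given $f\in W^{\mathbf{n}(d)+1,\infty}_0(M,g;\snu,\can)$, the differential $\dd f$ is a section of $\Hom(TM,f^*T\snu)$, and we may view it as a $(d+1)\times(d+1)$-matrix field $\Psi$ on $M$ after composing with the standard embeddings of $TM$ and $T\snu$ into $\R^{d+1}$ via $\iota$ and the sphere's embedding; precisely, $\Psi = \dd(f\circ)\,$ acting on the ambient frame, with the normal directions handled by the Gauss map. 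The point is that $\dist(\dd f,\so(g,\can))$ small pointwise is equivalent, up to constants depending on the $C^{\mathbf{n}(d)+2}$-geometry, to $\dist(\Psi(x),\so(d+1))$ small, where now $\so(d+1)$ is the \emph{flat} rotation group; this is where the bound on $\|f\|_{W^{\mathbf{n}(d)+1,\infty}}$ and the regularity of $\iota$ enter, to keep all the frame-change matrices and their derivatives bounded.

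Next I would invoke Proposition~\ref{propn: geometric rigidity} in the flat setting for the $\R^{d+1}$-valued map $F:=f$ regarded through the embeddings: there is a fixed $\RR_0\in\so(d+1)$ with
\begin{equation*}
\|\na(\iota_{\snu}\circ f) - \RR_0\|_{L^2} \le C\,\|\dist(\na(\iota_{\snu}\circ f),\so(d+1))\|_{L^2}.
\end{equation*}
The work then is to transfer this back: I would show that the right-hand side is controlled by $\|\dist(\dd f,\so(g,\can))\|_{L^2(M,g)}$ plus a term measuring the failure of $\na(\iota_{\snu}\circ f)$ to be the ``honest'' differential — this discrepancy is exactly the second fundamental form contribution of $\snu$ in $\R^{d+1}$, i.e. a term of the form $B_{\snu}(\dd f,\dd f)\sim |\dd f|^2_{\can}\,\nu$ pointing in the ambient normal direction $\nu = f$. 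Here the special geometry of the sphere is crucial: because $\snu$ is totally umbilic with $B_{\snu}(X,Y)=-\langle X,Y\rangle\,\nu$, this normal part is \emph{algebraically determined} by the tangential part, and when $\dd f$ is close to an isometry (so $|\dd f|_{\can}^2\approx d$ is nearly constant), the Piola identity of Kupferman--Maor--Shachar forces $\cof\dd f$ (hence the Gauss map of $f$, hence $\nu\circ f$) to be nearly divergence-free, so that $\RR_0$ restricted to $\snu$ is automatically a rigid motion of the sphere rather than merely an affine map of $\R^{d+1}$. The harmonic-map equations for $f$ (namely $\tau(f)=\Delta f + |\dd f|^2 f$, normal to $\snu$) are used to absorb the curvature error terms into lower-order quantities bounded by $\|f\|_{W^{\mathbf{n}(d)+1,\infty}}$.

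I expect the main obstacle to be precisely the last transfer step: ensuring that the flat minimizer $\RR_0$ can be replaced by, or is automatically equal to (up to an error of the right order), a constant orthogonal $(d+1)\times(d+1)$-matrix \emph{restricted to $\snu$}, i.e. an element of the rigid-motion set rather than a general affine map. The naive flat estimate only gives an affine best-approximation; pinning it down to $\snu$ requires using that both $f$ and the comparison map land in $\snu$, combined with the umbilicity of $\snu$ and the near-isometry hypothesis, to argue that the translational part of $\RR_0$ vanishes and the linear part preserves the sphere. A secondary technical point is book-keeping the various norm equivalences: the constant in \eqref{3.1b} must be shown to depend only on $\|f\|_{W^{\mathbf{n}(d)+1,\infty}}$ and the $C^{\mathbf{n}(d)+2}$-geometry of $(M,g)$, which forces one to track how many derivatives of the immersion $\iota$ and of the frame fields are consumed when differentiating the harmonic-map identity and the Piola identity — this is the role of the threshold $\mathbf{n}(d)>\tfrac d2$, which is exactly the Sobolev exponent needed for $W^{\mathbf{n}(d),\cdot}\hookrightarrow L^\infty$-type control on the nonlinear error terms in dimension $d$.
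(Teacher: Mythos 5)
Your proposed reduction to the flat Friesecke--James--M\"uller estimate does not work, and the obstruction is precisely the heart of the problem. Proposition~\ref{propn: geometric rigidity} requires the domain to be a bounded Lipschitz domain in $\R^d$, mapping to $\R^d$. Here the source is a genuinely curved $d$-dimensional Riemannian manifold $M$ (merely immersible into $\snu$, not flat), and the map $\iota_{\snu}\circ f$ takes values in $\R^{d+1}$ rather than $\R^d$. You cannot ``regard $M$ through the embeddings'' and then invoke the flat estimate: $\na(\iota_{\snu}\circ f)$ is a $(d{+}1)\times d$ matrix field over a curved manifold, not a square gradient field over a flat domain, and the very absence of such a transfer principle is why the paper presents this as a first step towards the open problem in Kupferman--Maor--Shachar. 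Even a chart-by-chart attempt fails, because pulling $\so(g,\can)$ back to the flat $\so(d)$ through non-constant frame fields introduces errors that are $O(1)$ rather than controlled by $\|\dist(\dd f,\so(g,\can))\|_{L^2}$; and one would then still need to glue local constant rotations into a single global one.

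The paper proceeds intrinsically instead. Starting from the Riemannian Piola identity \eqref{piola identity, weak}, it derives a manifold PDE for $\cof(\dd f)-\dd f$ (equation \eqref{cof df - df}), then splits $f=w+z$ with $z$ solving the part of the equation sourced by $\cof(\dd f)-\dd f$ (small in $L^2$, of order $\epsilon := \|\dist(\dd f,\so(g,\can))\|_{L^2}$) with $z|_{\p M}=0$, and $w$ solving $\Delta_g(\iota\circ w) = -{\rm tr}_g[(f^*B)(\dd f,\dd f)]$. The sphere's umbilicity enters exactly where you expected, but in the form of the identity $(f^*B)(\dd f, X) = f\,\langle \dd f, X\rangle$, which makes the $z$-estimate close and makes the $w$-equation a perturbed harmonic-map equation. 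The $W^{1,2}$ smallness of $z$ and $w$ follows from Poincar\'e; the crucial new ingredient is then manifold Calder\'on--Zygmund estimates applied repeatedly to the $w$-equation to bootstrap to $\|w\|_{W^{\mathbf{n}(d)+2,2}} \lesssim \epsilon$, which via Sobolev $W^{\mathbf{n}(d)+2,2}\emb W^{2,\infty}$ (this is where $\mathbf{n}(d)>d/2$ is used, as you correctly identified) gives pointwise closeness of $\dd w$ to a single constant matrix $\RR$. The final step pins $\RR$ into $\so(g,\can)$ using smallness of $\dist(\dd w,\so(g,\can))$ and invariance under ambient isometries. None of this passes through a flat FJM application; your plan omits the decomposition, omits the higher-order bootstrap, and hinges on a transfer that is unavailable. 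Your secondary observations (the harmonic-map/umbilicity structure, the role of the Sobolev threshold, the need to eventually restrict the limiting affine map to a sphere-rigid motion) are correct as motifs, but they do not repair the foundational gap.
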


The assumption that $\|f\|_{W^{\mathbf{n}(d)+1,\infty}(M,g;\,\snu,\can)}<\infty$ is not sharp;
in fact, our goal here is not to find the sharp conditions on $f$.
For example, the assumption that $\|f\|_{W^{2,\infty}\cap W^{3,2}(M,g;\,\snu,\can)}<\infty$ will suffice when $d\in\{2,3\}$.

By the dilation: $can \mapsto \lambda^2 can$, $\lambda \neq 0$, the theorem holds when the target manifold is replaced
by any round $d$-sphere, with constant $C$ depending additionally on $\lambda$.
The theorem is also invariant under the transform: $\dd f \mapsto O \circ \dd f$
for a rigid motion $O$, and inequality~\eqref{3.1b} holds with the same constant $C$.

In contrast to the Euclidean case (Proposition \ref{propn: geometric rigidity}),
we impose the higher integrability assumption on $f$ and obtain estimate \eqref{3.1b} for the non-Euclidean case
in Theorem \ref{thm: geom rigidity}.
It would be interesting to see whether the result can be improved
by finding a uniform constant $C$ (depending only on the geometry of $M$ and $\snu$).

To understand Theorem~3.2, we now present the following variant
of an asymptotic rigidity result that was established for the more general case of nearly conformal maps
by Re\u{s}etnjak \cite{r}; see also  \cite[Corollary~3.3]{fjm}.

\begin{corollary}\label{cor: resetnjak}
{Let $d \in \mathbb{Z}_{\geq 2}$, and let $\mathbf{n}(d)$ be any number strictly greater than $\frac{d}{2}$.}
Let $(M,g)$ be a Riemannian manifold that can be
{$C^{{\mathbf{n}(d)}+2}$}-isometrically immersed in $(\snu,\can)$.
Let $\{f_\varepsilon\}$ be a family of mappings from $M$ to $\snu$ with a uniform bound in
{$W^{\mathbf{n}(d)+1,\infty}_0(M,g;\,\snu,\can)$}.
If the $L^2$-norm of the distance between $\dd f_\varepsilon$ and the group of orientation-preserving isometries from $(M,g)$ to $(\snu,can)$ shrinks to zero at a rate of $\mathcal{O}(\e)$,
then there exists a particular rigid motion whose $L^2$-distance to $\dd f_\varepsilon$ shrinks to zero also at a rate of $\mathcal{O}(\e)$, after passing to subsequences.
\end{corollary}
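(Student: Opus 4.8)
The plan is to obtain Corollary~\ref{cor: resetnjak} as an essentially immediate consequence of Theorem~\ref{thm: geom rigidity}, the only point of substance being that the constant supplied by that theorem is uniform along the family $\{f_\varepsilon\}$. Fix once and for all a $C^{\mathbf{n}(d)+2}$-isometric immersion $\iota:(M,g)\to(\snu,\can)$, which exists by hypothesis and is independent of $\varepsilon$; recall that, by definition, every rigid motion has the form $\dd(O\circ\iota)=O\cdot\dd\iota$ for a constant orthogonal $(d+1)\times(d+1)$-matrix $O$, regarded as a section of $\Hom(TM;\R^{d+1})$, and that the rigid motions lying in $\so(g,\can)$ are exactly those for which $O$ lies in the connected component of $O(d+1)$ rendering $O\circ\iota$ orientation-preserving --- a closed, hence compact, subset of the compact group $O(d+1)$.

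First I would apply Theorem~\ref{thm: geom rigidity} to each $f_\varepsilon$: this produces, for every $\varepsilon>0$, a rigid motion $\RR_\varepsilon$ with
\begin{equation*}
\|\dd f_\varepsilon - \RR_\varepsilon\|_{L^2(M,g;\,\snu,\can)} \leq C_\varepsilon\,\|\dist(\dd f_\varepsilon, \so(g,\can))\|_{L^2(M,g)},
\end{equation*}
the constant $C_\varepsilon$ depending only on $\|f_\varepsilon\|_{W^{\mathbf{n}(d)+1,\infty}(M,g;\,\snu,\can)}$ and on the (fixed) $C^{\mathbf{n}(d)+2}$-geometry of $(M,g)$. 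Since the proof of Theorem~\ref{thm: geom rigidity} produces a constant that one may take to be non-decreasing in $\|f_\varepsilon\|_{W^{\mathbf{n}(d)+1,\infty}}$, and since $K:=\sup_\varepsilon\|f_\varepsilon\|_{W^{\mathbf{n}(d)+1,\infty}(M,g;\,\snu,\can)}<\infty$ by assumption, there is a single constant $C_0=C_0\big(K,(M,g)\big)<\infty$ with $C_\varepsilon\leq C_0$ for all $\varepsilon$. Combined with the hypothesis $\|\dist(\dd f_\varepsilon,\so(g,\can))\|_{L^2(M,g)}=\mathcal{O}(\varepsilon)$, this already yields rigid motions $\RR_\varepsilon$ with $\|\dd f_\varepsilon - \RR_\varepsilon\|_{L^2(M,g;\,\snu,\can)}=\mathcal{O}(\varepsilon)$.

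It remains to produce a single limiting rigid motion along a subsequence. Write $\RR_\varepsilon=O_\varepsilon\cdot\dd\iota$ with $O_\varepsilon$ in the compact set described above, and observe that the linear map sending a constant $(d+1)\times(d+1)$-matrix $A$ to the section $A\cdot\dd\iota$ is bounded from the (finite-dimensional) space of such matrices into $L^2(M,g;\,\snu,\can)$, with operator norm at most $\|\dd\iota\|_{L^2(M,g)}<\infty$ (finite since $\iota\in C^1$ and $M$ has finite volume, which is implicit in the statement as $L^2$-norms of rigid motions appear). By compactness, after passing to a subsequence $\varepsilon_k\downarrow 0$ we may assume $O_{\varepsilon_k}\to O_\infty$, and then $\RR_{\varepsilon_k}\to\RR:=O_\infty\cdot\dd\iota$ in $L^2(M,g;\,\snu,\can)$, with $\RR$ again a rigid motion. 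Then
\begin{equation*}
\|\dd f_{\varepsilon_k} - \RR\|_{L^2(M,g;\,\snu,\can)} \leq \|\dd f_{\varepsilon_k} - \RR_{\varepsilon_k}\|_{L^2(M,g;\,\snu,\can)} + \|\RR_{\varepsilon_k} - \RR\|_{L^2(M,g;\,\snu,\can)},
\end{equation*}
where the first term on the right is $\mathcal{O}(\varepsilon_k)$ by the preceding paragraph and the second tends to $0$; in particular $\dd f_{\varepsilon_k}\to\RR$ in $L^2$, as claimed. I do not expect any genuine obstacle: all of the analytic weight rests in Theorem~\ref{thm: geom rigidity}, and the only matters requiring care --- the uniformity of its constant over the family, which follows from the uniform $W^{\mathbf{n}(d)+1,\infty}$-bound and the fixed geometry, and the transfer of compactness from $O(d+1)$ to the space of rigid motions realised as $L^2$-sections, which follows from the boundedness of $A\mapsto A\cdot\dd\iota$ --- are routine.
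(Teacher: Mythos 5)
Your proof is essentially the same as the paper's, but streamlined: both arguments apply Theorem~\ref{thm: geom rigidity} to each $f_\varepsilon$ and then invoke compactness of the group of rigid motions to extract a convergent subsequence $\RR_{\varepsilon_k}\to\RR$. The paper's proof takes a slight detour --- it first extracts a subsequence with $\dd f_{\varepsilon_j}\to F$ strongly in $L^2$, shows $F\in\so(g,\can)$ \emph{a.e.}, and only afterwards identifies $F$ with the limiting rigid motion $\RR_0$ by uniqueness of limits --- whereas you bypass the intermediate limit $F$ entirely and close the argument with one triangle inequality, which is cleaner. You also make explicit the point that the constant furnished by Theorem~\ref{thm: geom rigidity} is uniform along the family because of the uniform $W^{\mathbf{n}(d)+1,\infty}$-bound; the paper's proof relies on this silently. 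Both proofs share the same slight looseness at the end: the triangle inequality $\|\dd f_{\varepsilon_k}-\RR\|_{L^2}\leq \|\dd f_{\varepsilon_k}-\RR_{\varepsilon_k}\|_{L^2}+\|\RR_{\varepsilon_k}-\RR\|_{L^2}$ controls the first term by $\mathcal{O}(\varepsilon_k)$ but only establishes that the second term vanishes, so strictly speaking one obtains $\dd f_{\varepsilon_k}\to\RR$ in $L^2$ rather than a literal $\mathcal{O}(\varepsilon_k)$ rate to the fixed $\RR$; you are candid about this in your closing sentence, and the paper's own proof has the same feature, so this reflects the informal phrasing of the corollary rather than a defect in your argument.
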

\begin{proof}
By assumption, $\{\dd f_\varepsilon\}$ is uniformly bounded in
{$W^{\mathbf{n}(d),\infty}(M,g;\,\snu,\can)$}.
Hence, thanks to a standard compactness argument, it contains a subsequence $\{\dd f_{\varepsilon_j}\}$
that converges to some
{$F \in W^{\mathbf{n}(d),\infty}(M,g;\,\snu,\can)$} strongly in $L^2$.
Since $\|{\rm dist}(\dd f_\varepsilon, \so(g,\can))\|_{L^2}\to 0$ at rate $\mathcal{O}(\e)$,
it follows that $F\in \so(g,\can)$ \emph{a.e.}.
On the other hand, by Theorem~\ref{thm: geom rigidity}, for each $j$,
there is a rigid motion $\RR_{\varepsilon_j}$ with $\|\dd f_{\varepsilon_j} - \RR_{\varepsilon_j}\|_{L^2} \to 0$
at rate $\mathcal{O}(\e_j)$.
Since the group of rigid motions is compact,
there is a further subsequence $\{\varepsilon_{j_k}\}\subset \{\varepsilon_{j}\}$
such that $\|\RR_{\varepsilon_{j_k}}-\RR_0\|_{L^2} \to 0$ for a constant rigid motion $\RR_0$.
Then it follows from the  uniqueness of limits that $F = \RR_0$ \emph{a.e.}.
\end{proof}

\medskip
To prove the geometric rigidity theorem, Theorem 3.2,
we need the Piola identity in terms of the extrinsic geometry.

\subsection{The Riemannian Piola Identity}
Recall that, for a Euclidean mapping $f: \Omega \subset \R^d \map \R^d$,
the classical Piola identity reads as
$$
{\rm div}[\cof(\dd f)]=0,
$$
where $\cof(\dd f)$ is the cofactor matrix of the $(d\times d)$-matrix $\dd f$.
Kupferman--Maor--Shachar \cite[Theorems~1--2]{kms} generalized
it to the mappings between Riemannian manifolds (also see \cite{ks}).
Here we only collect some results in \cite{kms} related to our subsequent
development.

Consider a smooth mapping $f: (M,g) \map (\tM,\tg)$ between the Riemannian manifolds.
We first derive some identities for general manifolds $\tM$ and then specialize to $\tM=\snu$.
As in $\S \ref{subsec: vector bdls}$, we can view $\dd f \in \G(T^*M \otimes f^*T\tM)$;
equivalently, $\dd f \in \Omega^1(M,f^*T\tM)$, that is, $\dd f$ is a $f^*T\tM$-valued differential $1$-form over $M$.
By a standard application of multi-linear algebra (see \cite[\S 2.1]{kms}),
$\cof(\dd f) \in \Omega^1(M,f^*T\tM)$ may be defined intrinsically.
Furthermore, define the co-derivative:
	\begin{equation}
	\delta_{\na^{f^*T\tM}}\,:\,  \Omega^1(M,f^*T\tM) \map  \Omega^0(M,f^*T\tM)
	\end{equation}
as the formal $L^2$--adjoint operator of the differential between $f^*T\tM$-valued differential forms.
Then
\begin{equation}\label{piola}
\delta_{\na^{f^*T\tM}} \big(\cof(\dd f)\big) = 0,
\end{equation}
which is known as the {\em Riemannian Piola identity}.
In the geometric literature, it is more common to denote $\delta_\na$ by $\na^*$ or $\na^\dagger$.

For our purpose, we need to express the Piola identity \eqref{piola} in terms of the extrinsic geometry.
More precisely, assume that $\iota: (\tM,\tg) \map (\R^D, \euc)$ is an isometric embedding into the Euclidean space.
Such $\iota$ always exists for large enough $D$, due to Nash's embedding theorem in \cite{nash}.
Let $B$ be the second fundamental form of $\tM$ with respect to $\iota$, {\it i.e.}, $B: \G(T\tM) \times \G(T\tM) \map \G((T\tM)^\perp)$,
such that, for $u,v\in\G(T\tM)$ and $\eta \in \G((T\tM)^\perp)$,
\begin{equation}\label{A, two for N}
\euc(B(u,v),\eta) = \euc(\na^{\tM \times \R^D}_v \eta, \dd\iota \circ u);
\end{equation}
also see the equation below (2.8) in \cite{kms}.

We remark in passing on the following notations in \cite{kms}:
$\na^{M \times \R^D}$ and $\na^{\tM\times \R^D}$ should be understood as $(\iota\circ f)^*\na^{\R^D}$ and $\iota^*\na^{\R^D}$, respectively;
that is, they are the pullback affine connections, where $\na^{\R^D}$ denotes the Levi-Civita connection on $\R^D$ as before.
Indeed, it can be checked that \eqref{A, two for N} is equivalent to the definition in $\S \ref{subsec: geom prelim}$ above.
Correspondingly, in \cite{kms}, symbols $M\times\R^D$ and $\tM\times\R^D$ should be understood respectively
as the pullback bundles $(\iota\circ f)^* T\R^D \equiv f^*(\iota^*T\R^D)$ and $\iota^* T\R^D$.

With the above preparations, we can now state the weak formulation of the Riemannian Piola identity as in \cite[Theorem $2$]{kms}.
It is derived by splitting the tangent bundle $T\R^D$ into the tangential ($T\tM$) and normal ($(T\tM)^\perp$) directions,
and by applying the pullback operations under $\iota$ and $f$ suitably:
\begin{align}\label{piola identity, weak}
&\int_M \big\langle (f^* \dd\iota) \circ \cof(\dd f),\, \na^{M \times \R^D}\zeta\big\rangle_{g\otimes \euc}\,\dvg \nonumber\\
&= \int_M \big\langle {\rm tr}_g [ (f^*B)(\dd f, \cof(\dd f))],\,\zeta\big\rangle_\euc \,\dvg
\qquad \text{ for each $\zeta \in \G_0(M \times \R^D)$}.
\end{align}
We understand the terms in \eqref{piola identity, weak} as follows: On the left-hand side,
\begin{enumerate}
\item[(i)]
$f^*\dd\iota : f^*T\tM \map f^*(\iota^* T\R^D) \cong T(M \times \R^D)$,
\item[(ii)]
$\cof(\dd f) : TM \map f^*T\tM$,

\smallskip
\item[(iii)]
$\na^{M \times \R^D} \zeta : TM \map T(M \times \R^D)$;
\end{enumerate}
while, on the right-hand side,
\begin{enumerate}
\item[(i)]
$f^*B: f^*T\tM \times f^*T\tM \map f^*((T\tM)^\perp) \subset T(M \times \R^D)$,
\item[(ii)]
$f^*B(\dd f, \cof(\dd f)): TM \times TM \map  f^*((T\tM)^\perp)$,
\item[(iii)]
${\rm tr}_g[f^*B(\dd f, \cof(\dd f))] \in \G(f^*((T\tM)^\perp)) \subset \G[T(M \times \R^D)]$.
\end{enumerate}
Thus, both sides of \eqref{piola identity, weak} are well-defined so that we can integrate \eqref{piola identity, weak}
over $M$ with respect to the volume measure $\dvg$ induced by $g$.
By the Sobolev embeddings and an elementary approximation argument,
\eqref{piola identity, weak} is valid for any $f \in W^{1,p}(M;\tM)$ with $p \geq 2(d-1)$ ($p>2$ if $d=2$)
and $\zeta \in (W^{1,2}_0 \cap L^\infty)(M;\R^D)$.

\subsection{Proof of the Geometric Rigidity Theorem, Theorem \ref{thm: geom rigidity}}
Our proof follows the strategies in Friesecke--James--M\"{u}ller \cite{fjm}.
Nevertheless, it is far from a straightforward adaptation; we have to apply some ideas for dealing with harmonic
maps (see {\it e.g.}, H\'{e}lein \cite{helein} and H\'{e}lein--Wood \cite{hw}) to tackle
with the nonlinearities originated from the non-Euclidean geometry.\bigskip\\
\begin{proof}
We divide the proof into seven steps.

\smallskip
{\bf 1.} To start with, we derive an equation for $\cof(\dd f)-\dd f$.
It follows from the weak formulation of the Piola identity \eqref{piola identity, weak} that,
for each $\zeta \in \G_0((\iota\circ f)^*T \R^D)$,
\begin{align}\label{new, piola 1}
&\int_M \big\langle (f^* \dd\iota) \circ [\cof(\dd f)-\dd f],\, \na^{(\iota\circ f)^*T \R^D}\zeta\big\rangle_{g\otimes \euc}\,\dvg \nonumber\\
&\,= \int_M \big\langle {\rm tr}_g [(f^*B)(\dd f, \cof(\dd f))\big],\,\zeta\big\rangle_\euc \,\dvg\nonumber\\
&\,\quad - \int_M \big\langle (f^*\dd\iota) \circ \dd f,\, \na^{(\iota\circ f)^*T \R^D}\zeta\big\rangle_{g\otimes \euc}\,\dvg.
\end{align}
As before, $\na^M$ and $\na^{\R^D}$ denote  the Levi-Civita connections on $M$ and $\R^D$, respectively.
In view of the definitions of the co-derivative and $\langle\bullet,\bullet\rangle_{g\otimes\euc}$ and the fact that
$$
(f^*\dd\iota)\circ \dd f=\dd (\iota\circ f) \in \G_0((\iota\circ f)^*T\R^D),
$$
we have
\begin{align*}
&\int_M \big\langle (f^*\dd\iota) \circ \dd f,\, \na^{(\iota\circ f)^*T \R^D}\zeta\big\rangle_{g\otimes \euc}\,\dvg\nonumber\\
&= \int_M \big\langle  \dd(\iota\circ f),\,\na^{(\iota\circ f)^*T\R^D}\zeta\,\big\rangle_{g\otimes \euc}\,\dvg \nonumber\\
&= \int_M \big\langle \delta_{\na^{(\iota\circ f)^*T\R^D}} \circ  {\na^{(\iota\circ f)^*T\R^D}} \,(\iota\circ f),\,\zeta\big\rangle_{g\otimes \euc}\,\dvg\nonumber\\
& = \int_M {\rm tr}_g\big\{\big\langle \delta_{\na^{(\iota\circ f)^*T\R^D}} \circ  {\na^{(\iota\circ f)^*T\R^D}} \,(\iota\circ f),\,\zeta\big\rangle_{\euc}\big\}\,\dvg.
\end{align*}
Recall that, for a vector bundle $E$ over $M$, the Bochner Laplacian
$$
{\rm tr}_g(\delta_{\na^E}\circ\na^E) \qquad \mbox{over $\G(E)\equiv\Omega^0(M,E)$}
$$
coincides with the Hodge Laplacian, {\it i.e.},
the negative of the Laplace--Beltrami operator $\Delta_g$.
Then, identifying $(\iota\circ f)^* T\R^D$ with the trivial bundle $M\times\R^D$
and hence viewing both $\iota\circ f$ and $\zeta$ as mappings from $M$ to $\R^D$,
we may infer from \eqref{new, piola 1} that
\begin{align}\label{new, piola 3}
&\int_M \big\langle (f^* \dd\iota) \circ [\cof(\dd f)-\dd f],\, \na^{M \times \R^D}\zeta\big\rangle_{g\otimes \euc}\,\dvg \nonumber\\
& = \int_M \big\langle {\rm tr}_g [(f^*B)(\dd f, \cof(\dd f))],\,\zeta\big\rangle_\euc \,\dvg
    +\int_M \big\langle \Delta_g (\iota \circ f), \,  \zeta\big\rangle_{\euc}\,\dvg.
\end{align}
Now, using the identity
$$
\delta_{\na^{M \times \R^D}} \circ (f^*\dd\iota) = \delta_{\na^{f^*T\tM}}
$$
and the arbitrariness of the test differential form $\zeta \in \G_0((\iota\circ f)^*T \R^D)$, we have
\begin{align}\label{cof df - df}
&\delta_{\na^{f^*T\tM}}[\cof\,(\dd f)-\dd f] \nonumber\\
&= \Delta_g (\iota \circ f) + {\rm tr}_g[(f^*B) (\dd f,\dd f)]
   + {\rm tr}_g[(f^*B)(\dd f, \cof\,(\dd f)-\dd f)].
\end{align}
We will carry out the estimates for $f$ based on this equation.

\smallskip
{\bf 2.} We decompose $f=w+z$ with
\begin{equation}\label{z equation}
\begin{cases}
\Delta_g(\iota \circ z) = \delta_{\na^{f^*T\tM}} [\cof\,(\dd f)-\dd f]\\
\qquad\qquad\quad\, -{\rm tr}_g[(f^*B) (\dd f, \cof\,(\dd f)-\dd f)]\qquad \text{ on $M$},\\
z|_{\p M}=0
\end{cases}
\end{equation}
and
\begin{equation}\label{w equation}
\begin{cases}
\Delta_g(\iota\circ w) = -{\rm tr}_g[(f^*B)(\dd f,\dd f)]\qquad \text{ on } M,\\
w|_{\p M}=0.
\end{cases}
\end{equation}
This is possible, since $f|_{\p M}=0$ by assumption. We set
$$
\epsilon := \|\dist(\dd f, \so(g, \tg))\|_{L^2(M,g)}
$$
and assume $\epsilon \leq 1$ without loss of generality.
Since $\dd f \in \so(g, \tg)$ implies that ${\rm cof}\,(\dd f)=\dd f$ {\it a.e.} ({\it cf.} \cite[Corollary 4]{kms}),
there is a uniform constant $C=C(M,g)$ such that
\begin{equation*}
|\cof\,(\dd f)-\dd f|^2 \leq C\,\dist^2(\dd f, \so(g,\tg))\qquad \text{\it a.e.}.
\end{equation*}
It follows that
\begin{equation}
\|\cof\,(\dd f)-\dd f\|_{L^2(M,g;\tM,\tg)} \leq C\,\epsilon.
\end{equation}
Notice that, if $B \equiv 0$, {\it i.e.}, $\tM$ is Euclidean, then $w$ is taken to be a harmonic function.
This agrees with the case in Friesecke--James--M\"{u}ller \cite{fjm}.

\smallskip
{\bf 3.} We first derive the estimate for $z$.
Multiplying $\iota \circ z$ to \eqref{z equation} and recalling that $\iota$ is an isometric embedding,
we obtain
\begin{align}\label{new, dz}
\int_M |\dd z|^2\dvg
\leq \, & \int_M \big|\langle \na^{f^*T{\widetilde{M}}}(\iota \circ z), \, \cof(\dd f)-\dd f\rangle\big|\dvg\nonumber\\
   \,& + \int_M \big| z\,{\rm tr}_g[ (f^*B) (\dd f, \cof\,(\dd f)-\dd f)] \big|\dvg,
\end{align}
where the norm of $\dd z$ is taken with respect to both metrics $g$ and $\tg$:
\begin{equation}\label{new_modulos}
|\dd z| := \sqrt{\langle \dd z, \dd z\rangle_{g\otimes \tg}}.
\end{equation}

From now on, we focus on the case: $(\tM,\tg) = (\snu,\can)$. In this case, we have
\begin{equation}\label{new_harmonic map on sphere}
(f^*B)(\dd f,{\rm cof}(\dd f)-\dd f) = f\langle \dd f,{\rm cof}(\dd f)-\dd f\rangle_{g\otimes\can};
\end{equation}
see \cite[(26)]{hw}. Also, $|f|=1$. Thus, for some $C=C(M,g,\|f\|_{W^{1,\infty}(M,g)})$,
\begin{equation*}
\int_M |\dd z|^2\dvg
\leq C \Big\{ \int_M |\dd z| \big|\cof\,(\dd f)-\dd f\big|\dvg
+ \int_M|z|\big|\cof\,(\dd f)-\dd f\big|\dvg\Big\}.
\end{equation*}
{Here and hereafter, for notational convenience, we write
\begin{align*}
\|f\|_{W^{1,\infty}(M,g)} \equiv \|f\|_{W^{1,\infty}(M,g;\snu,\can)} \equiv  \|f\|_{W^{1,\infty}(M,g;\R^{d+1},\mathfrak{e})}.
\end{align*}
The latter equality holds since $(\snu,\can)$ is isometrically embedded in $(\R^{d+1},\mathfrak{e})$.}
Since $M$ is bounded and $z|_{\p M}=0$, we deduce from the Cauchy--Schwarz inequality,
$|z|=|\iota\circ z|_{g\otimes\euc}$,
and the Poincar\'{e} inequality (for functions on manifold $M$, {\it i.e.}, $0$-forms):
\begin{equation*}
\int_M |\phi|^2 \dvg \leq c_0^2 \int_M |\dd\phi|^2\dvg\qquad \text{ for each $\phi \in W^{1,2}_0(M;\R^D)$}
\end{equation*}
with $c_0=c_0(M,g)$
that
\begin{equation}\label{estimate for z}
\int_M |\dd z|^2\dvg \leq C(M,g, \|f\|_{W^{1,\infty}(M,g)})\,{\epsilon}^2.
\end{equation}

\smallskip
{\bf 4.} Next, we bound $w$ from \eqref{w equation}. Using $f=w+z$, $f \in \tM=\snu$ \emph{a.e.} on $M$,
and the specific geometric properties of $\snu$
{(namely identity~\eqref{new_harmonic map on sphere})}, we have
\begin{align*}
\int_M\big(|\dd w|^2+|w|^2|\dd f|^2\big)\,\dvg = -\int_{M}wz|\dd f|^2\,\dvg.
\end{align*}
Since $\|f\|_{W^{1,\infty}(M,g)} \leq C_1$, using the Poincar\'{e} inequality, we can estimate
\begin{align*}
\int_M \big(|\dd w|^2+|w|^2|\dd f|^2\big)\,\dvg &\leq C_1^2 \int_M|w||z|\,\dvg\\
&\leq \frac{1}{2}\int_M |\dd w|^2\,\dvg + 8C_1^4 c_0^4 \int_M |\dd z|^2\,\dvg.
\end{align*}
Thus, together with estimate~\eqref{new, dz} for
${\dd z}$, we conclude
\begin{align}\label{estimate for w}
\int_M |\dd w|^2\,\dvg \leq  C\big(M,g, \|f\|_{W^{1,\infty}(M,g)}\big) \,\epsilon^2.
\end{align}

In view of the Poincar\'{e} inequality again, \eqref{estimate for z}--\eqref{estimate for w}
can be summarized as
\begin{equation}\label{H1 estimate for w,z}
\|(w,z)\|_{W^{1,2}(M,g)}
\leq C(M,g)\,\epsilon.
\end{equation}
The $W^{2,2}$-estimates for $w$ can be derived directly from \eqref{w equation}:
\begin{align}\label{new, W2,2 estimate for w}
\|w\|_{W^{2,2}(M,g;\,\tM,\tg)}
&\leq C_1 \big(\|\Delta_g(\iota\circ w)\|_{L^2(M,g)} + \|w\|_{L^2(M,g)}\big)\nonumber\\
&\leq C_1\big(\big\|f|\dd f|^2\big\|_{L^2(M,g)} + \|w\|_{L^2(M,g)}\big)\nonumber\\
&\leq C(M,g,\|f\|_{W^{1,\infty}(M,g;\,\tM,\tg)})\, \epsilon.
\end{align}
In the first inequality above, we have used the Calder\'{o}n--Zygmund estimates on $M$ (see, {\it e.g.} Wang \cite{wang})
so that $C_1$ depends only on the
{$C^3$}--geometry of $(M,g)$.
The second inequality follows from \eqref{w equation}. The final inequality holds
by the assumption that $\|f\|_{W^{1,\infty}(M,g)}\leq C$ and \eqref{H1 estimate for w,z}.

\smallskip
{\bf 5.}
To proceed, {let us} estimate up to the $W^{4,2}$-norm of $w$. We {\em claim} that
\begin{equation}\label{W4,2 for w}
\|w\|_{W^{4,2}(M,g)} \leq C(M,g, \|f\|_{W^{2,\infty}\cap W^{3,2}(M,g)})\,\epsilon.
\end{equation}
To see this, taking two derivatives to the right-hand side of \eqref{w equation}
and then expressing it in local coordinates for the sake of clarity, we have
\begin{equation*}
\int_M|\na^2 \Delta_g w|^2\,\dvg = \int_{M} \Big|g^{k\ell}\frac{\p}{\p x^k} \frac{\p}{\p x^\ell} \Big(f g^{pq}\can_{ij} \frac{\p f^i}{\p x^p}\frac{\p f^j}{\p x^q}\Big)\Big|^2\,\dvg.
\end{equation*}
By the chain rule, we obtain
\begin{align}\label{new_C4 est}
&\int_M|\na^2 \Delta_g w|^2\,\dvg\nonumber\\
&\leq C\int_M \Big(|\na^2f|^2|\na f|^4 + |f|^2|\na^2f|^4 + |f|^2|\na^3 f|^2|\na f|^2\Big)\,\dvg,
\end{align}
where $C$ depends only on the
{$C^3$}-geometry of $M$. A similar computation yields
\begin{align}\label{new_C3 est}
\int_M|\na \Delta_g w|^2\,\dvg \leq C\int_M \big(|\na f|^3 + |f||\na f||\na^2 f|\big)^2\,\dvg.
\end{align}
By the $W^{1,2}$-estimate~\eqref{H1 estimate for w,z} for $f$ and the assumption
that $\|f\|_{W^{2,\infty}\cap W^{3,2}(M,g)}<\infty$, the right-hand sides of \eqref{new_C4 est}--\eqref{new_C3 est}
are both controlled by $\epsilon^2$.
Therefore, we may conclude the \emph{claim} from the Calder\'{o}n--Zygmund estimates on $M$.

\smallskip
{\bf 6.}
{We can now conclude the proof for  $d\in \{2,3\}$. Indeed,} utilizing the Sobolev embedding: $W^{4,2} \emb W^{2,\infty}$,
it follows from
{\eqref{new_C4 est}--\eqref{new_C3 est} and \eqref{new, W2,2 estimate for w}} that
$$
\|w\|_{W^{2,\infty}(M,g)} \leq C\,\epsilon,
$$
where $C=C (M,g, \|f\|_{W^{2,\infty}\cap W^{3,2}(M,g)})$.
Thus, by the fundamental theorem of calculus, there is a
{$d \times d$ matrix $\RR$} such that
\begin{align}\label{osc}
\sup_{x \in M} |\dd w(x) - \RR | \leq C \epsilon,
\end{align}
{where the norm $|\bullet|$ is defined as in \eqref{new_modulos} above, with $\dd w-\RR$ in lieu of $\dd z$.}
On the other hand, by the definition of $\epsilon$ and \eqref{estimate for z}, we have
\begin{align*}
&\big\|\dist(\dd w, \so(g,\can))\big\|_{L^2(M,g)}\\
&\leq
\big\|\dist(\dd f, \so(g,\can))\big\|_{L^2(M,g)}
+ \big\|\dist(\dd z, \so(g,\can))\big\|_{L^2(M,g)}\leq C\epsilon.
\end{align*}
{Furthermore, the left-hand side of \eqref{osc} is invariant under the actions by isometries of $\R^{d+1}$ so}
that we can take $\RR \in \so(g,\can)$ in \eqref{osc};
that is, $\RR$ is a rigid motion.
Thanks to \eqref{estimate for z} and \eqref{osc}, the proof is now complete
{for  $d\in \{2,3\}$}.

\smallskip
{
{\bf 7.} Finally, we now explain how to modify the arguments in Steps~5--6 above to deal
with the general case: $d=\dim M$.
Observe first that Step~6 remains valid, once we can estimate $w$ in some norm that is stronger
than $\|\bullet\|_{W^{2,\infty}(M,g)}$.
It suffices to obtain a bound of form $\int_M|\na^{\mathbf{n}(d)}\Delta_g w|^2\,\dvg < \infty$, which is
equivalent to estimating $\|w\|_{W^{\mathbf{n}(d)+2,2}(M,g)}$ by the Calder\'{o}n--Zygmund estimates,
since the following Sobolev continuous embedding holds when $\mathbf{n}(d)>\frac{d}{2}$:
\begin{equation*}
W^{\mathbf{n}(d)+2,2}(M,g) \emb W^{2,\infty}(M,g).
\end{equation*}
Now we are left to bound $\int_M|\na^{\mathbf{n}(d)}\Delta_g w|^2\,\dvg$.
Notice that
\begin{align*}
\int_M|\na^{\mathbf{n}(d)}\Delta_g w|^2\,\dvg
= \int_M \Big|\na^{\mathbf{n}(d)}\Big(f g^{pq}{\rm can}_{ij} \frac{\p f^i}{\p x^p}\frac{\p f^j}{\p x^q} \Big)\Big|^2\,\dvg.
\end{align*}
The integrand on the right-hand side contains, by the Leibniz rule, at most $\mathbf{n}(d)+1$ derivatives in both $f$ and the metric.
Since $(M,g)$ has finite volume as a submanifold of $(\snu,\can)$, the right-hand side is controlled
by $\|f\|_{W^{\mathbf{n}(d)+1,\infty}(M,g)}$ and the $C^{\mathbf{n}(d)+2}$-geometry of $(M,g)$. This completes the proof.
}
\end{proof}

\subsection{Remarks}
Concerning the geometric rigidity theorem (Theorem~\ref{thm: geom rigidity}) and its proof,
we have the following remarks in order:

\smallskip
{\bf 1.} Starting from Step~3 in the above proof of Theorem~\ref{thm: geom rigidity},
our argument deviates from Step~2 in the proof of  \cite[Proposition~3.4]{fjm}.
In \cite{fjm}, map $w$ is harmonic, so an interior bound for $\|\na^2w\|_{L^2}$ follows easily; see (3.16) therein.
However, in our case, $ \Delta_g(\iota\circ w) = -{\rm tr}_g[(f^*B)(\dd f,\dd f)]$ in $M$; see \eqref{w equation}.
We proceed as in \eqref{new, W2,2 estimate for w} by the estimate:
$\|w\|_{W^{2,2}} \lesssim \|\Delta_g(\iota\circ w)\|_{L^2(M,g)} + \|w\|_{L^2(M,g)}$.
This is where the additional regularity assumptions on $f$ come into play.

\smallskip
{\bf 2.} We have imposed an extra condition $f=0$ (hence $w=0$) on $\p M$ in Theorem~\ref{thm: geom rigidity},
while no boundary condition for $f$ is needed in Proposition~\ref{propn: geometric rigidity}.
This is due to the fact that $w$ is not a \emph{harmonic function} in our case.
Instead, PDE~\eqref{w equation} for $w$ is a ``perturbed'' harmonic map equation:
when $w=f$, it is exactly the harmonic map equation.
Therefore, the weighted Hessian estimate as in \cite[(3.26)]{fjm} is no longer valid in general,
thus preventing us from adapting the arguments in \cite{fjm}
to derive the estimates up to the boundary,
unless $f=0$ on $\p M$.

\smallskip
{\bf 3.} It would be interesting to further generalize Theorem~\ref{thm: geom rigidity}
to general ambient manifolds $(\tM,\tg)$. In this case, it is natural to define that $\RR \in \so(g,\tg)$ is a rigid motion
if and only if $\RR$ is an isometry of the tangent bundle of the ambient manifold with respect to $\tg$.
We may view $\RR \in \so(g,\tg)$ by identifying $\RR$ with its restriction on $TM$.
Such a definition entails that,
for most of the ambient manifolds $(\tilde{M}, \tilde{g})$,
$\so(g,\tg)=\{{\rm Id}_M\}$,
whence \eqref{3.1b} trivially holds with $C=1$.
The nontrivial case that $\so(g,\tg)$ has more than one element occurs when, roughly speaking,
$(T\tM,\tg)$ has a large degree of symmetries, \emph{e.g.},
when $(\tM,\tg)$ is a space form (\emph{i.e.}, a Riemannian manifold with constant sectional curvature).

\smallskip
{\bf 4.} In the case of a general ambient manifold $(\tM,\tg)$,
the main difficulty lies in obtaining a smallness estimate for $\|\dd w\|_{L^2(M,g;\tM,\tg)}$.
Indeed, when multiplying $w$ to both sides of Eq.~\eqref{w equation} and integrating over $M$,
we obtain
\begin{align*}
\int_M |\dd w|^2\,\dvg
= -\int_M\Big\{g_{\gamma\delta}g^{ij}w^\delta\big(\tG^\gamma_{\alpha\beta}\circ f\big) (\dd f)^\alpha_i (\dd f)^\beta_j
\Big\}\,\dvg
\end{align*}
in local coordinates with Einstein's summation convention,
where $\tG^\gamma_{\alpha\beta} \in C^2(\tM,\tg)$ are the Christoffel symbols on $\tM$ of the Levi-Civita connection for $\tg$; see \cite{hw,es}.
There is no apparent structure for the integrand on the right-hand side
that leads to $\|\dd w\|_{L^2(M,g;\tM,\tg)} \lesssim \epsilon$ as in \eqref{estimate for w}.

\smallskip
{\bf 5.} In the recent preprint {\rm \cite{new-akm}}, Alpern--Kupferman--Maor further extended their asymptotic rigidity theorem
in {\rm \cite{kms}} to hypersurfaces in space forms.
As remarked in {\rm \cite{new-akm}}, a quantitative result in the form of a geometric rigidity theorem may help to extend
the asymptotic rigidity theorem therein to arbitrary ambient Riemannian manifolds.
It would be interesting to further investigate such extensions.

\section{Asymptotic Rigidity of Elastic Membranes}
\label{sec: convergence}

In this section, we formulate and prove an asymptotic rigidity theorem for elastic membranes,
{\it i.e.}, immersed hypersurfaces $M^\e$ in the Euclidean space $(\R^{d+1}, \euc)$, in which
$\euc$ is the Euclidean metric as before.
The theorem addresses the convergence of both deformations and extrinsic geometries.

\begin{theorem}\label{thm: convergence}
Let $\mi$ be a sequence of $d$-dimensional Riemannian manifolds and $p>d \geq 2$.
Let $\Phi^\varepsilon:(\mi,\gi) \emb (\R^{d+1}, \euc)$ be $W^{2,p}_\loc$-isometric immersions.
Let $M$ be a Riemannian manifold  with $W^{1,p}_\loc$-metric $g$ so that there are
bi-Lipschitz homeomorphisms $\ffi:M \map \mi$ whose bi-Lipschitz constants are uniformly bounded on compact sets such that
\begin{equation}
\ffiu [\gi] - g \longrightarrow 0\qquad \text{ in $W^{1,p'}_\loc(M)$ $\,$ for $p'=\frac{p}{p-1} \in [1,2)$},
\end{equation}
and that $\Phi^\varepsilon\circ \ffi$ are uniformly bounded in $W^{2,p}_{\rm loc}(M,g;\,\R^{d+1}, \euc)$.
Then, after passing to a subsequence if necessary, $\Phi^\varepsilon\circ\ffi$ converges weakly in $W^{2,p}_\loc$ to an isometric
immersion $\bigphi:(M,g) \map (\R^{d+1}, \euc)$ so that its second fundamental form
is a weak $L^p_\loc$-limit of the second fundamental forms of $\Phi^\varepsilon$, obeying the Gauss--Codazzi equations \eqref{gauss}--\eqref{codazzi}.
\end{theorem}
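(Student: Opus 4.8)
The plan is to transport the whole family to the fixed manifold $(M,g)$ via the maps $\ffi$, so that the statement becomes a compactness and weak-continuity question for a family of $W^{2,p}_\loc$ isometric immersions of $(M,\hatgi)$ whose induced metrics $\hatgi$ converge to $g$, and then to run a version of the weak-continuity argument behind Proposition~\ref{propn: chenli, weak continuity thm for GCR eq} adapted to a \emph{varying} background metric. I would set $\comp=\phii\circ\ffi$, $\hatgi:=\ffiu[\gi]$, and let $\hatbi$ be the pullback of $\bi$ under $\ffi$ (this is the natural way to compare the second fundamental forms, which live on the different manifolds $\mi$). Since by construction $\ffi:(M,\hatgi)\map(\mi,\gi)$ is a bi-Lipschitz \emph{isometry}, $\hatbi$ is exactly the second fundamental form of $\comp:(M,\hatgi)\emb\eucl$, so in local coordinates $(\hatbi)_{ij}=\p_i\p_j\comp-(\hgai)^k_{ij}\,\p_k\comp$ with $\hgai$ the Christoffel symbols of $\hatgi$; moreover $\hatgi=(\comp)^*\euc$, so the components of $\hatgi$ are Euclidean inner products of first derivatives of $\comp$. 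It thus suffices to extract a subsequence of $\comp$ that converges weakly in $W^{2,p}_\loc$ to an isometric immersion $\bigphi$ of $(M,g)$ whose second fundamental form is the weak $L^p_\loc$-limit of $\hatbi$ and solves \eqref{gauss}--\eqref{codazzi}.

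First I would collect the uniform bounds. As $\comp$ is uniformly bounded in $W^{2,p}_\loc$, a subsequence satisfies $\comp\weak\bigphi$ in $W^{2,p}_\loc$, and by the compact Sobolev embedding ($p>d$) we get $\dd\comp\to\dd\bigphi$ in $C^0_\loc$. Since $\dd\comp$ is uniformly bounded in $W^{1,p}_\loc\cap L^\infty_\loc$, an algebra, $\hatgi$ is uniformly bounded in $W^{1,p}_\loc$; together with $\hatgi\to g$ in $W^{1,p'}_\loc$ this forces $\hatgi\weak g$ in $W^{1,p}_\loc$, $\hatgi\to g$ in $C^0_\loc$, and (by interpolation) $\hatgi\to g$ in $W^{1,q}_\loc$ for every $q\in[p',p)$. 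Hence $\hatgi$ is uniformly nondegenerate on compacta, $(\hatgi)^{-1}\to g^{-1}$ in $C^0_\loc$, and $\hgai$ is uniformly bounded in $L^p_\loc$ with $\hgai\to\Gamma(g)$ strongly in $L^{p'}_\loc$ and $\hgai\weak\Gamma(g)$ in $L^p_\loc$. Passing to the limit in $\hatgi=(\comp)^*\euc$ using $\dd\comp\to\dd\bigphi$ in $C^0_\loc$ gives $(\bigphi)^*\euc=g$; since $g$ is a genuine metric and $\dd\bigphi$ is continuous, $\dd\bigphi$ has everywhere nonzero determinant, so $\bigphi$ is a $W^{2,p}_\loc$ isometric immersion of $(M,g)$. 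Finally, the coordinate formula for $\hatbi$ together with the bounds on $\p^2\comp$, $\hgai$, $\p\comp$ yields a uniform $L^p_\loc$-bound on $\hatbi$, so after a further subsequence $\hatbi\weak\widehat B$ in $L^p_\loc$.

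Passing to the limit in $(\hatbi)_{ij}=\p_i\p_j\comp-(\hgai)^k_{ij}\,\p_k\comp$ using $\p^2\comp\weak\p^2\bigphi$ in $L^p_\loc$, $\hgai\weak\Gamma(g)$ in $L^p_\loc$, and $\p\comp\to\p\bigphi$ in $C^0_\loc$, I obtain $\widehat B_{ij}=\p_i\p_j\bigphi-\Gamma^k_{ij}(g)\,\p_k\bigphi$, i.e.\ $\widehat B$ is the second fundamental form of $\bigphi$ on $(M,g)$; in particular it is the weak $L^p_\loc$-limit of the pulled-back second fundamental forms of $\phii$. There remains the compatibility: each $\hatbi$ solves \eqref{gauss}--\eqref{codazzi} on $(M,\hatgi)$ with curvature $\hatri=R(\hatgi)$, and we let $\e\to0$. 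Equation \eqref{codazzi} is affine in $\hatbi$: its leading term $\p\hatbi$ converges to $\p\widehat B$ in $W^{-1,p}_\loc$, while the lower-order terms $\hgai\star\hatbi$ converge to $\Gamma(g)\star\widehat B$ in $\mathcal{D}'$ since $\hgai\to\Gamma(g)$ strongly in $L^{p'}_\loc$ and $\hatbi\weak\widehat B$ in $L^p_\loc$ with $\tfrac{1}{p}+\tfrac{1}{p'}=1$; hence \eqref{codazzi} holds for $\widehat B$ on $(M,g)$. Equation \eqref{gauss} is the delicate one, because its left-hand side is a quadratic expression $\hatbi\star\hatbi$; here I would invoke the geometric compensated-compactness (``div--curl'') structure of the Gauss--Codazzi system from \cite{chenli}: Codazzi for $\hatbi$ makes the divergences and curls of the relevant one-forms and tensors assembled from $\hatbi$ lie in a precompact subset of a negative Sobolev space, so the div--curl lemma yields the weak sequential continuity of precisely that quadratic form. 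Since also $\hatri=R(\hatgi)\to R(g)$ in $\mathcal{D}'$ (again because $\hgai\to\Gamma(g)$ in $L^{p'}_\loc$ while bounded in $L^p_\loc$), passing to the limit in \eqref{gauss} gives the Gauss equation for $\widehat B$ on $(M,g)$. One could alternatively deduce \eqref{gauss}--\eqref{codazzi} from the fact, just proved, that $\widehat B$ is the second fundamental form of the isometric immersion $\bigphi$ of $(M,g)$, via the low-regularity fundamental theorem of \cite[Theorem~5.1]{chenli}.

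The main obstacle is the Gauss step: one must verify that the compensated-compactness machinery of \cite{chenli} — the div--curl cancellation, and the precompactness of the relevant differential constraints — survives the perturbation $\hatgi\to g$ of the underlying metric, i.e.\ one needs exactly the ``variant'' of Proposition~\ref{propn: chenli, weak continuity thm for GCR eq} in which the metric is allowed to vary. The mechanism is that replacing $\hatgi$ by $g$ in every coefficient generates only error terms that are products of a sequence tending to $0$ in $L^{p'}_\loc$ with a sequence bounded in $L^p_\loc$, hence negligible in the limit; carrying this out carefully, uniformly over an exhaustion of $M$, is where the work lies. By contrast, the reduction and the compactness arguments above are routine manipulations with Sobolev embeddings, interpolation, and the algebra property of $W^{1,p}_\loc\cap L^\infty_\loc$ for $p>d$.
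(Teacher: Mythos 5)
Your main route parallels the paper's proof closely: you transport the problem to $(M,g)$ via $\ffi$, define $\hatgi = \ffiu[\gi]$ and the pullback second fundamental form $\hatbi$, derive that $(\hatgi, \hatbi)$ satisfies the exact Gauss--Codazzi equations (hence that $(g,\hatbi)$ satisfies \emph{approximate} Gauss--Codazzi on the fixed manifold $(M,g)$ with an error term), and invoke a variant of Proposition~\ref{propn: chenli, weak continuity thm for GCR eq} that tolerates errors tending to zero in a negative Sobolev norm. This is precisely the paper's strategy, and the paper supplies exactly such a variant as Lemma~\ref{lemma: chenli}. Where your main route stops short is the point you yourself flag: the paper's Step~4 decomposes the Gauss error $[{\rm Error}]_1 = (\hatri - R)(X,Y,Z,W)$ into eight terms $J_1,\ldots,J_8$ and shows each tends to zero in $W^{-1,r}$ for some $r>1$. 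Several of those terms involve $\hatnai_X\hatnai_Y Z$, which is only bounded in $W^{-1,p}$ rather than $L^p$, so the pairing against $\hatgi - g \to 0$ in $W^{1,p'}$ is the right tool; your heuristic \emph{``products of a sequence tending to zero in $L^{p'}$ with a sequence bounded in $L^p$''} captures the remaining terms but not that leading one. Your estimate of the uniform $L^p$-bound on $\hatbi$ and the identification of $\bigphi$ as an isometric immersion match the paper's Steps~1 and~5.

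The more interesting point is the alternative route you sketch at the end: since $\hgai \rightharpoonup \Gamma(g)$ in $L^p_\loc$ (weak limit from the $L^p$ bound and strong $W^{1,p'}_\loc$ convergence of $\hatgi$) and $\dd(\Phi^\e\circ\ffi) \to \dd\bigphi$ in $C^0_\loc$, you can pass to the limit directly in the coordinate Gauss formula $(\hatbi)_{ij}=\p_i\p_j(\Phi^\e\circ\ffi)-(\hgai)^k_{ij}\,\p_k(\Phi^\e\circ\ffi)$ and read off that the weak $L^p_\loc$-limit $\widehat B$ is exactly the second fundamental form of $\bigphi$ with respect to $g$. Since $\bigphi$ is a $W^{2,p}_\loc$ isometric immersion of $(M,g)$ with $p>d$, the implication (iii)~$\Rightarrow$~(i) of \cite[Theorem~5.2]{chenli} immediately gives that $(g,\widehat B)$ satisfies \eqref{gauss}--\eqref{codazzi} distributionally — without invoking compensated compactness or Lemma~\ref{lemma: chenli} at all. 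This is a genuinely shorter path to the stated conclusion, trading the intrinsic div–curl analysis for the (already-established) low-regularity fundamental theorem. The paper's route has the compensating advantage of isolating the ``approximate Gauss--Codazzi'' structure as a standalone quantitative statement, which is of independent use (and consistent with the broader compensated-compactness programme of \cite{chenli,csw1,csw2}), whereas the shortcut packages the same content inside the black box of \cite[Theorem~5.2]{chenli}.
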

\begin{proof}  Throughout the proof, unless otherwise specified,
all the Sobolev spaces ${\bf X}=W^{k,p}, W^{-k,p}, L^p,\ldots$ are understood as ${\bf X}(M,g;\R^{d+1},\euc)$.
The proof is divided into six steps.

\smallskip
{\bf 1.} We first fix some notations.
Denote by $\na$ the Levi-Civita connection on $(M,g)$ and by $\nai$ the Levi-Civita connection on $(\mi,\gi)$. By assumption, $\Phi^\varepsilon: (\mi,\gi)\to(\R^{d+1},\euc)$ are isometric immersions and $\ffi$ are Lipschitz homeomorphisms. Then
\begin{equation*}
\comp: \big(M,  (\ffi)^*\gi\big) \longrightarrow \eucl
\end{equation*}
are also isometric immersions.
We write $\nnue$ for the outward unit normal vector field of $\comp$; in the proof below,
we view it as defined on $\R^{d+1}$.

We also set
	\begin{equation}\label{hat-g, hat-nabla}
	\hatgi := \ffiu \gi, \quad \hatnai := \ffiu \nai  \qquad\mbox{on $M$},
	\end{equation}
and
\begin{equation}\label{hat-b}
\hatbi(X,Y):=-\euc(\barna_{\push(X)} \nnue, \push(Y))\quad \text{for } X,Y \in \G(TM),
\end{equation}
which are well-defined since $\ffi$ are bi-Lipschitz homeomorphisms. For notational convenience,  write
\begin{equation*}
\xe := (\ffi)_*X = \dd\ffi(X) \in \G(TM^\e)  \qquad\,\mbox{for each $X \in \G(TM)$}.
\end{equation*}
That is, $X$ and $\xe$ are {\em $\ffi$-related}. The same convention applies to $Y_\varepsilon, Z_\varepsilon, W_\varepsilon\ldots$.
Also, by the locality of the theorem, without loss of generality,
we may take $M$ to be compact, so that the subscripts ``$_{\loc}$'' are dropped from now on.

The tensor $\hatbi: \G(TM) \times \G(TM) \to \R$ defined in \eqref{hat-b} is the second fundamental form
of the isometric immersion $\comp: (M,\hatgi)\to\eucl$.
Note that our convention here for the second fundamental form is slightly different from that in \S 2:
we view $\hatbi$ as a $\R$-valued function, which is more convenient for the case of codimension one.

The definition in \eqref{hat-b} is motivated by the following observations:
\begin{itemize}
\item
If $\comp$ is smooth for each $\varepsilon$, then, by definition (see \S\ref{sec: geometry}),
\begin{equation}\label{new_def_hat B}
\hatbi(X,Y) = \euc( \barna_{\push(X)} {\push(Y)},\nnue).
\end{equation}
The right-hand side of \eqref{new_def_hat B} can be understood as follows: we can locally extend $\push X$ and $\push Y$,
which are vector fields on $\comp(M)\subset \R^{d+1}$, to vector fields $X'_\varepsilon$ and $Y'_\varepsilon$ on $\R^{d+1}$, respectively,
and then set $\hatbi(X,Y) = \euc\big( \barna_{X'_\varepsilon} {Y'_\varepsilon},\nnue\big)$.
This is independent of the choice of extensions $X'_\varepsilon$ and $Y'_\varepsilon$; see do Carmo \cite[pp.126--127, \S 6]{doc}.

\smallskip
\item
Since $\comp$ is uniformly bounded in $W^{2,p}$, the right-hand side of \eqref{new_def_hat B}
is a product of two $W^{1,p}$-terms and one $L^p$-term. Since $p>d=\dim M$,
we see that $\hatbi$ is uniformly bounded in $L^p$ (see Step~5 below for details).

\smallskip
\item
The above remarks justify the computation below:
\begin{align*}
\quad \hatbi(X,Y)=&\, \push (X) \{\euc (\push(Y),\nnue)\} \\
&\,- \euc(\barna_{\push(X)} \nnue, \push(Y)).
\end{align*}
Since $\push(Y)$ is tangential and $\nnue$ is vertical, both with respect to the immersed hypersurface $\comp(M)$,
the first term on  the right-hand side vanishes. Therefore, we arrive at \eqref{new_def_hat B}.
\end{itemize}

In addition, by passing to subsequences if necessary, we have
\begin{align*}
\phii \circ \ffi \,\,\weak\,\, \bigphi \qquad \text{in } W^{2,p}.
\end{align*}
Note that $D^\top\bigphi \cdot D\bigphi \in W^{1,p}$ for $p>d$, thanks to the Sobolev--Morrey embedding.
Since $\hatgi:=(\ffi)^*\gi = (\comp)^*\euc$ converges in $W^{1,p'}$ to $g$ on $M$,
by a compactness argument and the uniqueness of limits, we conclude that $g = D^\top\bigphi \cdot D\bigphi$ as $W^{1,p}$-tensor fields.
Thus, $\bigphi$ is an isometry. Moreover, $|\det (D\bigphi)| = \sqrt{\det g} >0$ in the {\it a.e.} sense, so $\bigphi$ is also an immersion.

\smallskip
{\bf 2.} In order to prove the weak convergence of second fundamental forms,
we will prove that $\hatbi$ is an \emph{approximate solution} for the Gauss--Codazzi equation on $(M,g)$,
which will be made precise in Lemma~\ref{lemma: chenli} below.
In what follows, we let $X,Y,Z,W \in \G(TM)$ be arbitrary, and consider $\ffi$-related vector fields $\xe,\ye,\ze,\we \in \G(T\mi)$.
We identify them (without relabelling) with extensions $X',Y',Z',W'$, $X'_\e,Y'_\e, Z'_\e, W'_\e \in \G(T\R^{d+1})$ as in Step~1.

As before, $\comp: (M, \hatgi) \longrightarrow \eucl$ are isometric immersions for $\hatgi\equiv (\ffi)^*\gi$.
Then $\hatbi$ defined in \eqref{hat-b} satisfies the Gauss equation:
\begin{equation*}
\hatbi(X,Z)\hatbi(Y,W) - \hatbi(X,W)\hatbi(Y,Z) = \curve(X,Y,Z,W),
\end{equation*}
where $\curve$ is the Riemann curvature tensor of $(M,\hatgi,\hatnai)$
and we have used the fact that $\euc(\nnue,\nnue)=1$.

Denoting by $R^\varepsilon$ the Riemann curvature tensor of $(\mi,\gi,\nai)$, we have
\begin{equation*}
\curve = (\ffi)^*R^\varepsilon.
\end{equation*}
This follows from the tensorial property of the Riemann curvature:  For each $P \in M$,
\begin{align*}
&[\ffiu R^\varepsilon](X,Y,Z,W)|_P\\
& = R^\varepsilon(\xe,\ye,\ze,\we)|_{\ffi(P)}\\
& =\gi( \nai_\xe\nai_\ye\ze - \nai_\ye\nai_\xe \ze-\nai_{[\xe,\ye]}\ze ,\, \we) \big|_{\ffi(P)}\\
&=[(\ffi)^*\gi](\ffi^*\{\nai_\xe\nai_\ye\ze - \nai_\ye\nai_\xe \ze-\nai_{[\xe,\ye]}\ze\},\,(\ffi)^*\we)\big|_P\\
&= \hatgi( \hatnai_{X} \{\ffiu(\nai_\ye\ze)\} - \hatnai_{Y}\{\ffiu(\nai_\xe\ze)\}
- \hatnai_{\ffi^*[\xe,\ye]}(\ffi^*\ze),\, W )\big|_P\\
&= \hatgi(\hatnai_{X} \hatnai_{Y}Z- \hatnai_{Y}\hatnai_{X}Z
   - \hatnai_{[X, Y]}Z,\,W)\big|_P\\
&=: \hatri (X, Y, Z, W)|_P.
\end{align*}
For the penultimate equality, we have used
the Lie bracket identity $f_*([X,Y])=[f_*X,f_*Y]$.

From the computations above, we infer that
\begin{align}\label{pullback gauss eq}
\hatbi(X,Z)\hatbi(Y,W) - \hatbi(X,W)\hatbi(Y,Z) = R(X,Y,Z,W) + [{\rm Error}]_1,
\end{align}
where $R$ denotes the  Riemann curvature tensor of $(M,g,\na)$, and
\begin{equation*}
[{\rm Error}]_1:=\{\curve - R\}(X,Y,Z,W).
\end{equation*}

On the other hand, the Codazzi equation for $\hatbi$ reads as
\begin{equation}\label{pullback codazzi eq}
\barna_X\big(\hatbi(Y,Z)\nnue\big)-\barna_Y\big(\hatbi(X,Z)\nnue\big)=0.
\end{equation}

\smallskip
{\bf 3.} To proceed, we invoke the weak continuity of the Gauss--Codazzi equations  \eqref{gauss}--\eqref{codazzi}  established
in \cite{chenli}.
We employ a variant of Proposition \ref{propn: chenli, weak continuity thm for GCR eq},
which deals with the weak continuity of ``approximate solutions''
for the Gauss--Codazzi equations  \eqref{gauss}--\eqref{codazzi} (see \cite[Remark $4.1$]{chenli}).

\begin{lemma}\label{lemma: chenli}
Let $(M,g)$ be a $d$-dimensional Riemannian manifold with
{$W^{1,p}_\loc\cap L^\infty_\loc$}-metric for $p>2$.
Suppose that the tensor fields
$$
\two^\varepsilon: \G(T\mi) \times \G(T\mi) \map \G((T\mi)^\perp)
$$
both have a uniform bound
in $L^p_\loc$ and are ``approximate solutions'' of the Gauss--Codazzi equations \eqref{gauss}--\eqref{codazzi} in the following sense{\rm :}
\begin{eqnarray}
&&\euc(\two^\varepsilon(u,w), \two^\varepsilon(v,z))
  - \euc( \two^\varepsilon(u,z), \two^\varepsilon(v,w))-R(u,v,w,z)
   = \mathcal{O}^{(1)}_\varepsilon,\label{gauss approx}\qquad\,\,\,\, \\
&&\barna_{v} \two^\varepsilon(u,w) - \barna_{u} \two^\varepsilon(v,w) = \mathcal{O}^{(2)}_\varepsilon,\label{codazzi approx}
\end{eqnarray}
for arbitrary fixed $u, v, w, z \in \G(TM)$,
where
$\mathcal{O}^{(1)}_\varepsilon, \mathcal{O}^{(2)}_\varepsilon \to 0$ in $W^{-1,r}_\loc$ as $\varepsilon \to 0$
{for some $r>1$},
and $R$ is the Riemann curvature tensor of $g$ and the Levi-Civita connection $\na$ on $M$.
Then $\{\two^\varepsilon\}$ converges weakly in $L^p_\loc$ to a weak solution of the Gauss--Codazzi equations \eqref{gauss}--\eqref{codazzi}.
\end{lemma}

We will apply this lemma to
{$\two^\varepsilon := \hatbi\nnue$}, with $(u,v,w,z)$ replaced by $(X, Y, Z, W)$.
By \eqref{pullback gauss eq}  and \eqref{pullback codazzi eq}, we see that $[{\rm Error}]_1 = \mathcal{O}^{(1)}_\varepsilon$
and $\mathcal{O}^{(2)}_\varepsilon\equiv 0$ here.
It suffices to show that $\mathcal{O}^{(1)}_\varepsilon\to 0$ in $W^{-1,r}$ for some $r>1$ as $\varepsilon \to 0$
and that $\two^\varepsilon$ is uniformly bounded in $L^p$.

\smallskip
{\bf 4.} Using the definition of the Riemann curvature tensor, $\mathcal{O}^{(1)}_\varepsilon=\{\curve - R\}(X,Y,Z,W)$ can be expressed as
\begin{align}\label{O1'}
\mathcal{O}^{(1)}_\varepsilon =\,& (\hatgi-g)(\hatnai_{X} \hatnai_{Y} Z,\, W)
  + g( \{ \hatnai_{X} - \na_{X}\}\na_{Y}Z,\, W)\nonumber\\
&+ g( \na_{X}\{\hatnai_{Y} - \na_{Y}\}Z,\, W) - (\hatgi-g)( \hatnai_{Y} \hatnai_{X} Z,\, W)\nonumber\\
&- g(\{ \hatnai_{Y} - \na_{Y}\}\na_{X}Z,\, W) - g(\na_{Y}\{\hatnai_{X} - \na_{X}\}Z,\, W) \nonumber\\
&- (\hatgi-g)(\hatnai_{[X,Y]}Z,\, W) - g( \{\hatnai_{[X,Y]}-\na_{[X,Y]}\}Z,\, W) \nonumber\\
=:\,& \sum_{\ell=1}^8 {J}_\ell.
\end{align}

We first observe that
\begin{equation}\label{bddness of hat-Gamma}
\text{$\hgai$ $\, $ is uniformly bounded in $L^p$.}
\end{equation}
Indeed, since $\comp$ is uniformly bounded in $W^{2,p}$, it follows that $\hatgi$ is uniformly bounded in $W^{1,p}$.
This can be seen from explicit computations: in the local orthonormal coordinates $\{e_\alpha\}$ on $(M,g)$,
\begin{equation*}
\hatgi_{\alpha\beta} = D(\comp)^\alpha_\gamma D(\comp)^\gamma_\beta.
\end{equation*}
The Sobolev--Morrey embedding then implies the uniform boundedness of $\{\hatgi\}$ in $C^0$.
Thus, \eqref{bddness of hat-Gamma} follows from the continuity of the matrix inverse and the formula:
\begin{equation*}
{\hgai}^\alpha_{\beta\gamma}
=\frac{1}{2}(\gi)^{\alpha\delta}\big\{\p_\beta \gi_{\delta\gamma}
 +\p_\gamma \gi_{\delta \beta}-\p_\delta \gi_{\gamma\beta}\big\}.
\end{equation*}

In what follows, we justify the convergence $J_\ell \to 0$ term by term.

\smallskip
\noindent
\underline{$J_1$ and $J_4$.} A direct computation yields that
\begin{align*}
\hatnai_{X} \hatnai_{Y} Z
&= X^\delta \p_\delta\big(Y^\alpha\p_\alpha Z^\beta\big)\p_\beta
 + X^\delta Y^\alpha (\p_\alpha Z^\beta) \hgai^\gamma_{\delta\beta}\p_\gamma\nonumber\\
&\quad\, +X^\delta Y^\alpha Z^\beta \hgai^\gamma_{\alpha\beta} \hgai^\kappa_{\delta\gamma}\p_\kappa
+ X^\delta \p_\delta\big( Y^\alpha Z^\beta \hgai^\gamma_{\alpha\beta} \big)\p_\gamma,
\end{align*}
where the last term of the right-hand side is most singular.
We deduce from this equality that $\hatnai_{X} \hatnai_{Y} Z$ is uniformly bounded in $W^{-1,p}$
for any given $X,Y,Z \in \G(TM)$. Thanks to the assumption of the strong convergence $\hatgi \to g$ in $W^{1,p'}$,
we then obtain that
$$
J_1 \to 0 \qquad \mbox{in $W^{-1,r}$ as $\varepsilon \to 0$}
$$
for any $r <d'=\frac{d}{d-1}$. The argument for $J_4$ is analogous.

\smallskip
\noindent
\underline{$J_2$ and $J_5$.}$\,$  This is more direct. Since $\na_YZ \in L^p$,
$$
(\hatnai_X-\na_X)\na_YZ =X^i (\na_YZ)^j (\widehat{\G^\varepsilon}-\G)^k_{ij}\p_k \weak 0\qquad \mbox{in $L^{p/2}$}.
$$
Using $g \in W^{1,p}\emb C^0$, the Rellich lemma, and the Sobolev embedding,
$$
J_2 \to 0 \qquad\mbox{in $W^{-1,r}$ as $\varepsilon \to 0$}
$$
for all $r \in (1,\infty)$ if $d\le \frac{p}{2}$,
and for $r \in (1,\frac{dp}{2d-p})$ if $d>\frac{p}{2}$.
The argument for $J_5$ is analogous.

\smallskip
\noindent
\underline{$J_3$ and $J_6$.} $\,$ It suffices to argue for $J_3$.
As before, after passing to subsequences, $\hatnai_YZ-\na_YZ \weak 0$ in $L^p$
so that $\na_X(\hatnai_YZ-\na_YZ) \weak 0$ in $W^{-1,p}$.
Since $g \in W^{1,p}$, it follows from the Sobolev embedding that
$$
J_3 \to 0 \qquad\mbox{in $W^{-1,r}$ as $\varepsilon \to 0$ for any $r <d'=\frac{d}{d-1}$.}
$$

\smallskip
\noindent
\underline{$J_8$.} $\,$ Note that $[X,Y]=\na_XY-\na_YX$ as $\na$ is the Levi-Civita connection on $(M,g)$,
hence $[X,Y]\in L^p$.
It follows that
$$
\{\hatnai_{[X,Y]}-\na_{[X,Y]}\}Z = [X,Y]^i Z^j (\widehat{\G^\varepsilon}-\G)^k_{ij} \p_k \weak 0
\qquad\mbox{in $L^{p/2}$}.
$$
The rest of the argument is similar to that for $J_2$ and $J_5$ above.

\smallskip
\noindent
\underline{$J_7$.} Finally, note as before that $\hatnai_{[X,Y]}Z$ is uniformly bounded in $L^{p/2}$.
In $J_7$, this term is paired with $\hatgi-g$, which converges strongly to zero in $W^{1,p'}$.
By Sobolev embedding, for $p>d$, we have the compact embedding: $W^{1,p'} \emb L^{\frac{dp'}{d-p'}}=L^{\frac{dp}{d(p-1)-p}}$,
which is greater than or equal to the H\"{o}lder conjugate $\frac{p}{p-2}=(\frac{p}{2})'$.
Thus, $J_7 \to 0$ in $L^1$. By Sobolev embedding again,
$$
J_7 \to 0 \qquad \mbox{in $W^{-1,r}$ as $\varepsilon \to 0$ for any $r <d'=\frac{d}{d-1}$}.
$$

To summarize, by the arguments above, we conclude that $\mathcal{O}^{(1)}_\varepsilon \map 0$  in $W^{-1,r}$
for some $r>1$, where $\mathcal{O}^{(1)}_\varepsilon$ is defined in \eqref{O1'}.

\smallskip
{\bf 5.} Now we show that $\two^\varepsilon=\hatbi\nnue$ is uniformly bounded in $L^p$. Recall from \eqref{new_def_hat B} that
\begin{align*}
\two^\varepsilon(X,Y) = -\euc(\barna_{\push X}\nnue,\,\push Y)\nnue
\end{align*}
for any vector fields $X,Y \in \G(TM)$. For ease of notations, we write here $\xxe:=\push X$ and $\yye:=\push Y$.
By assumptions, they are uniformly bounded in $W^{1,p}$.
Thus, for  the Euclidean coordinate frame $\{\p_i\}$ on $\R^{d+1}$, we may express
\begin{align}\label{new_two}
\two^\varepsilon(X,Y) = - (\xxe)^i \big[\p_i(\nnue)^j\big](\yye)^j \nnue.
\end{align}

Assume for the moment that $\nnue$ is uniformly bounded in $L^p$.
Then the right-hand side of \eqref{new_two} is a product of three terms uniformly bounded in $W^{1,p}$
and another term uniformly bounded in $L^p$.
By assumption $p>d=\dim M$, the Sobolev--Morrey embedding shows that $W^{1,p} \emb C^0$.
Hence, $\two^\varepsilon$ is uniformly bounded in $L^p$.

To justify the above claim, we make use of the following expression for $\nnue$:
\begin{align}\label{new_normal}
\nnue = \frac{\dd (\comp)^1 \wedge \cdots \wedge \dd (\comp)^d}{\|\dd (\comp)^1 \wedge \cdots \wedge \dd (\comp)^d\|} \mres \dve,
\end{align}
where $\dve$ is the Euclidean volume form on $\eucl$, $\mres$ is the interior multiplication,
and $\|\bullet\|$ is the mass norm for vector fields.
This equality is understood modulo obvious isomorphisms between the tangent and cotangent bundles.
Note that the denominator $\|\dd (\comp)^1 \wedge \cdots \wedge \dd (\comp)^d\| \geq c_0$ for a uniform constant $c_0>0$,
since $\Phi^\e$ are isometric immersions (hence non-degenerate) and $\ffi$ have uniformly bounded bi-Lipschitz constants.
Moreover, $\dd (\comp)^1 \wedge \cdots \wedge \dd (\comp)^d$ is a wedge product of $d$ differential $1$-forms uniformly
bounded in $W^{1,p}$.
Again, since  $W^{1,p} \emb C^0$ for $p>d$, we deduce that $\nnue$ is uniformly bounded in $L^p$.

\smallskip
{\bf 6.} Finally, by Lemma \ref{lemma: chenli} and Steps~4--5, we conclude that $\hatbi\nnue$ converge weakly in $L^p$ to a weak solution of the Gauss--Codazzi equations \eqref{gauss}--\eqref{codazzi}.
This completes  the proof.
\end{proof}

\section{Continuous Dependence of the Deformation on the Cauchy--Green Tensor and Second Fundamental Form}
\label{sec: continuous dep}

In Ciarlet-Mardare \cite{cm3}, the following continuous dependence of the deformation was established:

\begin{theorem}[{\cite[Theorem $6.1$]{cm3}}]\label{thm: ciarlet}
Let $\Omega$ be a simply-connected, open, bounded subset of $\R^2$ with Lipschitz boundary $\p\Omega$.
Assume that $\Omega$ lies locally on the same side of $\p\Omega$, and $p>2$.
Define the spaces{\rm :}
$$
\mathbb{T}(\Omega) := \left\{(g,B)\, :\,
 \begin{array}{ll}
  g \in W^{1,p}(\Omega; {\rm Sym}^{+}_{2\times 2}T^*\Omega), B\in L^p(\Omega; {\rm Sym}_{2\times 2}T^*\Omega)\\
  \text{$(g,B)$ satisfies GCE \eqref{gauss}--\eqref{codazzi}}
 \end{array}
 \right\}
$$
and
\begin{equation*}
\mathbf{V}(\Omega) := W^{2,p}(\Omega; \R^3) / {\rm Isom}_+(\R^3),
\end{equation*}
which are equipped with the natural topologies inherited from the corresponding Sobolev spaces $W^{k,p}$ for $k\in\{0,1,2\}$.
Let $\Phi: \mathbb{T}(\Omega) \map \mathbf{V}(\Omega)$ map $(g,B)$ to the immersion $f$ such that $f$ is an isometric immersion
{from $(\Omega,g)$ to $(\R^3,\euc)$}
with the second fundamental form $B$. Then $\Phi$ is locally
{Lipschitz} continuous.
\end{theorem}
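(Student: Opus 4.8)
The plan is to recast the reconstruction of the immersion from $(g,B)$ as the integration of a first-order linear Pfaff system — the Cartan structural equations \cite{c} — and then to transfer a quantitative well-posedness estimate for that system, from its $L^p$ coefficients to its $W^{1,p}$ solution, into the desired Lipschitz bound. I would follow the pattern of our proof of the realisation theorem \cite[Theorem~5.1]{chenli}, now arranged so as to track continuous dependence; this also makes transparent why the same argument applies to simply-connected Riemannian manifolds of arbitrary dimension and codimension.

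\textbf{Step 1 (reduction to a Pfaff system).} First, given $(g,B)\in\mathbb{T}(\Omega)$, I would invoke the fundamental theorem of surface theory (in general dimension and codimension, \cite[Theorem~5.1]{chenli}) to obtain an isometric immersion $f:(\Omega,g)\to(\R^3,\euc)$ with second fundamental form $B$, unique up to an orientation-preserving rigid motion, so that $\Phi(g,B):=[f]\in\mathbf{V}(\Omega)$ is well defined. Attaching to $f$ its Darboux frame $F=(\p_1 f\mid\p_2 f\mid\nu)$, with $\nu$ the oriented unit normal, the Gauss and Weingarten formulas read
\begin{equation*}
\p_\alpha F = F\,\Lambda_\alpha(g,\p g,B)\quad\text{on }\Omega,\qquad F(x_0)=F_0,
\end{equation*}
where the connection matrices $\Lambda_\alpha$ are assembled from the Christoffel symbols of $g$ (linear in $\p g$, with coefficients rational in the entries of $g$) and from $B$ together with the shape operator; the Gauss--Codazzi equations \eqref{gauss}--\eqref{codazzi} are precisely the zero-curvature condition $\p_\alpha\Lambda_\beta-\p_\beta\Lambda_\alpha+[\Lambda_\alpha,\Lambda_\beta]=0$ rendering this system solvable. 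One then recovers $f(x)=f(x_0)+\int_{x_0}^x(\text{tangential columns of }F)$, and the freedom in $F_0$ and $f(x_0)$ accounts exactly for the quotient by ${\rm Isom}_+(\R^3)$. In higher codimension the same scheme applies with the extended Darboux frame, the data $(g,B,\na^\perp)$, and the full Gauss--Codazzi--Ricci compatibility, simple-connectedness being what permits the globalization of $F$ from local data, as in \cite{chenli}.

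\textbf{Step 2 (Lipschitz dependence of the coefficients; stability of the Pfaff system).} Since $p>d$, one has $W^{1,p}(\Omega)\emb C^0(\overline{\Omega})$, so on a fixed bounded neighborhood $\mathcal{N}\subset\mathbb{T}(\Omega)$ the metrics are uniformly bounded above and below, $\Gamma,B\in L^p$, and the map $(g,\p g,B)\mapsto\Lambda_\alpha$ is uniformly Lipschitz; hence
\begin{equation*}
\sum_\alpha\|\Lambda_\alpha^{(1)}-\Lambda_\alpha^{(2)}\|_{L^p(\Omega)}\le C(\mathcal{N})\big(\|g_1-g_2\|_{W^{1,p}(\Omega)}+\|B_1-B_2\|_{L^p(\Omega)}\big).
\end{equation*}
Normalizing $F_i(x_0)={\rm Id}$ and $f_i(x_0)=0$, I would then apply the analytic lemmas of Mardare \cite{m03,m05,m07} on Pfaff systems with $L^p$ coefficients — existence, uniqueness, and locally Lipschitz dependence of the $W^{1,p}$ solution on the coefficient matrices — to get
\begin{equation*}
\|F_1-F_2\|_{W^{1,p}(\Omega)}\le C(\mathcal{N})\sum_\alpha\|\Lambda_\alpha^{(1)}-\Lambda_\alpha^{(2)}\|_{L^p(\Omega)}.
\end{equation*}
Integrating the tangential columns and using the normalization gives $\|f_1-f_2\|_{W^{2,p}(\Omega)}\le C\,\|F_1-F_2\|_{W^{1,p}(\Omega)}$; bounding the quotient distance in $\mathbf{V}(\Omega)$ by the distance of these normalized representatives and chaining the three displays yields the local Lipschitz continuity of $\Phi$.

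\textbf{Main obstacle.} The hardest part will be the quantitative well-posedness of the linear Cartan system at this low regularity: for smooth coefficients it is classical, but with merely $L^p$ coefficients one must work inside Mardare's framework, the essential points being that $p>d$ places $F$ in $C^0$ so that the products $F\Lambda$ are meaningful and a fixed-point/Gr\"{o}nwall scheme closes, and that the Lipschitz constant of the solution operator can be controlled by $\|\Lambda\|_{L^p}$ uniformly over $\mathcal{N}$. A secondary difficulty, when passing from planar domains to simply-connected manifolds of arbitrary dimension and codimension, is the globalization of the frame: one constructs $F$ chart by chart and patches using simple-connectedness together with the zero-curvature condition, carrying the normal connection and the Ricci equation throughout, exactly as in the proof of \cite[Theorem~5.1]{chenli}.
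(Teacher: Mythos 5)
Your argument is correct in outline and follows essentially the same two-step mechanism as the paper's proof of its generalization, Theorem~\ref{thm: continuous dependence}: translate the Gauss--Codazzi system into a first-order linear Pfaff system and then invoke Mardare's $L^p$-coefficient stability theory \cite{m03,m05,m07}. The material difference is the choice of frame. You use the coordinate Darboux frame $F=(\p_1 f\mid\p_2 f\mid\nu)$, so the connection matrices $\Lambda_\alpha$ involve the Christoffel symbols and the shape operator $g^{-1}B$ directly; this is essentially the formulation of Ciarlet--Mardare's original proof in \cite{cm3}, which the paper characterizes as intricate. The paper instead uses the Cartan orthonormal moving-frame version from \cite{chenli}: the unknown $A$ is an orthogonal matrix field solving $\dd A=\mathbf W\cdot A$ with $\mathbf W\in\Omega^1(M,\mathfrak{so}(d+k))$ assembled from $\na^g,B,\na^E$, and the immersion is recovered from a separate Poincar\'{e} system $\dd f=w\cdot A$, $w$ being the dual co-frame. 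One concrete gain of the orthonormal version is that the initial condition $A(x_0)=\mathrm{Id}$ is always admissible, whereas your Darboux frame is constrained by $F^\top F=g\oplus 1$, so the normalization ``$F_i(x_0)=\mathrm{Id}$'' is inconsistent whenever $g_1(x_0)\neq g_2(x_0)$. You should instead take $F_i(x_0)=(g_i(x_0))^{1/2}\oplus 1$, observe that the resulting mismatch contributes $\lesssim\|g_1-g_2\|_{W^{1,p}}$ via the Sobolev embedding $W^{1,p}\emb C^0$ for $p>d$, and invoke the initial-data part of Mardare's stability estimate. A second small advantage of the paper's split is that it lets one treat the $g=g'$ and $g\neq g'$ cases in two short steps, because the $g$-dependence sits explicitly in the Poincar\'{e} integrand $w\cdot A$; in your framing these dependencies are tangled inside $\Lambda_\alpha$, which still works but requires exactly the Lipschitz estimate on $(g,\p g,B)\mapsto\Lambda_\alpha$ that you identify. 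With the normalization repaired, your proposal is correct.
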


In Theorem \ref{thm: ciarlet} above, ${\rm Isom}_+(\R^3)$ denotes the group of orientation-preserving isometries in the $3$-D Euclidean
space, {\it i.e.}, ${\rm Isom}_+(\R^3) = \R^3 \rtimes \so(3)$,
and
{${\rm Sym}_{d\times d}$ is the space of symmetric $d\times d$ matrices
with superscript ``${}^+$'' to designate the positive definiteness.
}

In nonlinear elasticity, Theorem \ref{thm: ciarlet} states the continuous dependence of the deformation
of elastic bodies on the Cauchy--Green tensor (\textit{i.e.,} the metric) and the extrinsic geometry,
for $2$-D elastic bodies with lower regularity.
Its proof is based on a programme developed by P.~G. Ciarlet, C. Mardare, and S. Mardare; see \cite{cm1, cm2, cm3} and
the references cited therein.
The goal of the programme (as summarized in \cite{cgm}) is to extend the ``fundamental theorem of surface theory''
(that is, a simply-connected surface immersed in $\R^3$ can be uniquely recovered from its metric $g$ and second fundamental
form $B$ modulo ${\rm Isom}_+(\R^3)$--actions) to the case of surfaces with lower regularity (\textit{i.e.},
when $g \in W^{1,p}$ and $B \in L^p$ for $p>2$).
Also see Szopos \cite{s} for the higher dimensional case.

The proof of Theorem \ref{thm: ciarlet} in \cite{cm1, cm2, cm3} may be outlined as follows:
First, the Gauss--Codazzi equations  \eqref{gauss}--\eqref{codazzi}  are transformed into
two types of first-order, nonlinear, matrix-valued PDEs,
known as the {\em Pfaff} and {\em Poincar\'{e}} systems.
Then, applying the analytic results due to Mardare \cite{m03, m05, m07},
the Pfaff and Poincar\'{e} systems are solved,
and the continuous dependence of solutions in suitable Sobolev spaces is proved.
On the other hand, the transformation from the Gauss--Codazzi equations  \eqref{gauss}--\eqref{codazzi}
to the Pfaff--Poincar\'{e} systems in \cite{cm1, cm2, cm3} appears highly intricate,
which involves many different types of geometric quantities ({\it e.g.}, metrics, connections, curvatures, $\dots$)
in local coordinates as the entries of the same matrices of enormous size.

In \cite{chenli}, we provided a simpler, more direct approach of proving the existence
of $W^{2,p}$--isometric immersions with respect to the prescribed $W^{1,p}$--metrics
and $L^p$--second fundamental forms by using the Cartan formalism of exterior calculus on manifolds.
In what follows, we explain how Theorem \ref{thm: ciarlet} can be recovered by utilizing the method in  \cite{chenli}.

In fact, we establish an analogue of Theorem \ref{thm: ciarlet} on a simply-connected Riemannian manifold $(M,g)$ in arbitrary dimensions
and co-dimensions (but neglecting the effects of boundary).
For this, let $E$ be a $\R^k$-vector bundle over $d$-dimensional Riemannian manifold
$M$ with bundle metric $g^E$ and compatible bundle connection $\na^E$.
We use Latin letters $X,Y,Z,W,\ldots$ to denote the tangential vector fields in $\G(TM)$, and Greek letters $\xi,\eta\in \G(f(TM)^\perp)$ or $\G(E)$
to denote the vector fields on the normal bundle $f(TM)^\perp$ or on some given vector bundle $E$.
Also, $S_\eta$ is the shape operator determined by the second fundamental form $B$ via $S_\eta(X,Y) = \langle \eta, B(X,Y)\rangle$,
$\widetilde{\na}$ is the Levi-Civita connection on the ambient space $\R^{d+k}$,
$R$ denotes the Riemann curvature tensor on $M$,
and $R^E$ denotes the Riemann curvature tensor on $E$:
$$
R^E(X,Y) = [\na^E_X, \na^E_Y] - \na^E_{[X,Y]}.
$$
Then the Gauss, Codazzi, and Ricci equations on $E$ are as follows in order:
\begin{eqnarray}
&& \langle B(Y,W), B(X,Z)\rangle - \langle B(X,W), B(Y,Z)\rangle = R(X,Y,Z,W),\label{gauss equation}\\
&& \widetilde{\na}_Y B(X,Z) = \widetilde{\na}_X B(Y,Z),\label{codazzi equation}\\
&& \langle [ S_\eta, S_\xi ] X, Y\rangle = R^E(X,Y,\eta,\xi).\label{ricci equation}
\end{eqnarray}
In \cite[Theorem 5.2]{chenli}, we established the equivalence of the following three clauses for $p>d$:
\begin{enumerate}
\item[(i)] the existence of a weak solution $(g,B,\na^E) \in W^{1,p}_{\rm loc} \times L^p_{\rm loc} \times L^p_{\rm loc}$
of the Gauss--Codazzi--Ricci equations \eqref{gauss equation}--\eqref{ricci equation} as above;

\item[(ii)] the Cartan formalism (see \cite[\S 5.2]{chenli} for details);

\item[(iii)] the existence of a $W^{2,p}_{\rm loc}$--isometric immersion whose metric, second fundamental form,
and normal connection are the weak solution $(g,B,\na^E)$ in (i) above.
\end{enumerate}

Based on these, we have

\begin{theorem}\label{thm: continuous dependence}
Let $M$ be a  $d$-dimensional simply-connected
{differentiable} Riemannian manifold.
Let $E$ be a $\R^k$-vector bundle over $M$ with bundle metric $g^E$ and compatible bundle connection $\na^E$.
Denote by $\mathcal{A}(E)$ the space of affine connections on $E$ over $M$.
For $p>d$, define the spaces{\rm :}
$$
\mathbb{T}(M) := \left\{(g,B,\na^E)\,:\,
\begin{array}{ll}
g\in W^{1,p}(M;{\rm Sym}^+_{d\times d}{T^*M}),  \na^E\in L^p\big(M;\mathcal{A}(E)\big)\\
B\in L^p(M;{\rm Sym}_{d\times d}{T^*M}\otimes TM^\perp)\\
(g,B,\na^E) \text{ satisfies GCRE \eqref{gauss equation}--\eqref{ricci equation} on $E$}
\end{array}
\right\}
$$
and
\begin{equation}
\mathbf{V}(M) := W^{2,p}(M; \R^n) / {\rm Isom}_+(\R^{d+k}).
\end{equation}
Let $\Phi: \mathbb{T}(M) \map \mathbf{V}(M)$  map $(g,B,\na^E)$ to an isometric immersion
{$f:(M,g)\to(\R^{d+k},\euc)$} with second fundamental form $B$ and normal connection $\na^E$.
Then $\Phi$ is locally
{Lipschitz} continuous.
\end{theorem}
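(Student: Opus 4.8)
The plan is to reconstruct the immersion $f$ from the data $(g,B,\na^E)$ exactly as in the Cartan--formalism proof of \cite[Theorem~5.2]{chenli}, and then to verify that every step of that reconstruction is \emph{locally Lipschitz} in the relevant Sobolev topology, the quantitative stability for the two PDE steps being furnished by the analytic lemmas of Mardare \cite{m03,m05,m07}. Since local Lipschitz continuity is a local assertion both in $\mathbb{T}(M)$ and --- by the very definition of the Sobolev topologies --- in $M$, I fix a datum $(g_0,B_0,\na^E_0)\in\mathbb{T}(M)$ together with a coordinate chart $U\subset M$ which we may take diffeomorphic to a ball, and establish a Lipschitz bound for nearby data on $U$; on a sufficiently small $W^{1,p}$--neighbourhood of $g_0$ the metric $g$ stays uniformly positive definite (as $W^{1,p}\emb C^0$ for $p>d$), so all algebraic operations below have uniform Lipschitz constants. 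Patching over a locally finite cover of $M$, and reconciling the per-chart normalisations through the ${\rm Isom}_+(\R^{d+k})$--quotient, yields the statement on $M$; that the patched object is well-defined modulo a single isometry is the uniqueness half of \cite[Theorem~5.2]{chenli}.

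First I would build, from $g\in W^{1,p}(U;{\rm Sym}^+_{d\times d}T^*M)$, a $g$--orthonormal coframe $\{\omega^i\}_{i=1}^d$ by Gram--Schmidt applied to the coordinate coframe (equivalently, via the positive square root of $g$). Because $p>d$, the space $W^{1,p}(U)\emb C^0$ is a Banach algebra and the square--root map on positive--definite symmetric matrices is smooth, so $g\mapsto\{\omega^i\}$ is locally Lipschitz $W^{1,p}\to W^{1,p}$; this is precisely the type of nonlinear composition estimate established in \cite{m05,m07}. The Levi-Civita connection $1$--forms $\omega^i_j$ are then obtained from $\{\omega^i\}$, $\dd\omega^i$ and $g^{-1}$ by the usual algebraic formula, hence depend locally Lipschitz on $g$ as elements of $L^p$. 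Writing the components of $B$ in this frame as $\omega^i_\alpha=h^i_{\alpha j}\omega^j$ and denoting by $\omega^\alpha_\beta$ the normal--connection $1$--forms of $\na^E$, I assemble the Maurer--Cartan--type $1$--form $\Omega=(\eta,\theta)$ on $U$, where $\theta\in L^p(U;\mathfrak{so}(d+k))$ has blocks $(\omega^i_j,\omega^i_\alpha,\omega^\alpha_\beta)$ and $\eta=(\omega^1,\dots,\omega^d,0,\dots,0)^{\top}\in W^{1,p}(U;\R^{d+k})$; each of $\theta$ and $\eta$ depends locally Lipschitz on $(g,B,\na^E)$. The Cartan structural equations (see \cite{c}) say that $\Omega$ is flat for the Euclidean--group structure, i.e.
\[
\dd\theta+\theta\wedge\theta=0,\qquad \dd\eta+\theta\wedge\eta=0,
\]
\emph{if and only if} $(g,B,\na^E)$ solves the Gauss--Codazzi--Ricci equations \eqref{gauss equation}--\eqref{ricci equation}; this algebraic identity is the core of \cite[\S5]{chenli} and requires nothing new.

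With the compatibility $\dd\theta+\theta\wedge\theta=0$ in hand and $U$ simply--connected, Mardare's Pfaff lemma \cite{m05,m07} produces the unique $R\in W^{1,p}(U;\so(d+k))$ with $\dd R=R\theta$ and a prescribed value $R(x_0)$, and --- this is the point --- asserts that $\theta\mapsto R$ is locally Lipschitz from $L^p$ to $W^{1,p}$. Setting $\lambda:=R\,\eta$, a product of a $W^{1,p}$ matrix field and a $W^{1,p}$ vector--valued form (hence in $W^{1,p}$ by the algebra property), one checks $\dd\lambda=(\dd R)\wedge\eta+R\,\dd\eta=R(\theta\wedge\eta)-R(\theta\wedge\eta)=0$; since $U$ is simply--connected, Mardare's generalised Poincar\'e lemma \cite{m03} yields $f\in W^{2,p}(U;\R^{d+k})$ with $\dd f=\lambda$ and a prescribed value $f(x_0)$, depending locally Lipschitz on $\lambda\in W^{1,p}$. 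By construction $f^*\euc=g$, the second fundamental form of $f$ is $B$, and the induced normal connection is $\na^E$, exactly as in \cite[Theorem~5.2]{chenli}. Composing the locally Lipschitz maps $g\mapsto(\theta,\eta)$, $\theta\mapsto R$, $(R,\eta)\mapsto\lambda$ and $\lambda\mapsto f$, the assignment $(g,B,\na^E)\mapsto f$ is locally Lipschitz $\mathbb{T}(U)\to W^{2,p}(U;\R^{d+k})$; changing the normalisations $(x_0,R(x_0),f(x_0))$ replaces $f$ by the action of an element of ${\rm Isom}_+(\R^{d+k})$, so the class $[f]=\Phi(g,B,\na^E)\in\mathbf{V}(U)$ is well-defined and $\Phi$ is locally Lipschitz into the quotient; patching over $M$ as above completes the argument.

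The hard part will not be the geometry of the reduction but the two analytic facts it rests on: (i) that $g\mapsto(\{\omega^i\},\omega^i_j)$ is locally Lipschitz from $W^{1,p}$ into $W^{1,p}\times L^p$, a nonlinear composition estimate valid only because $p>d$ makes $W^{1,p}\emb C^0$ an algebra; and (ii) the stability of the Pfaff system $\dd R=R\theta$ and of the Poincar\'e reconstruction $\dd f=\lambda$ under $L^p$, respectively $W^{1,p}$, perturbations of their data on a chart. Both (i) and (ii) are available off the shelf from Mardare \cite{m03,m05,m07}, so the remaining work is essentially bookkeeping --- tracking Lipschitz constants through the composition --- together with the Cartan repackaging of the Gauss--Codazzi--Ricci equations already carried out in \cite{chenli}. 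A secondary technical point is the globalisation over $M$ and the correct handling of the ${\rm Isom}_+(\R^{d+k})$--quotient, which is dispatched by the uniqueness statement in \cite[Theorem~5.2]{chenli}.
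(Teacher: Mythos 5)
Your proposal is correct and follows essentially the same route as the paper: translate the Gauss--Codazzi--Ricci system into Cartan's structural equations, reduce to the Pfaff system for an orthogonal frame $A$ (your $R$) and the Poincar\'{e} system for $\dd f = w\cdot A$ (your $\lambda = R\eta$), invoke Mardare's quantitative stability lemmas for both, and track Lipschitz constants through the composition before globalizing over the simply-connected $M$ via monodromy and the ${\rm Isom}_+(\R^{d+k})$-quotient. The only organisational difference is that the paper first dispatches the $g=g'$ case and then isolates the additional estimate $\|\na^g-\na^{g'}\|_{L^p}\lesssim\|g-g'\|_{W^{1,p}}$ plus $\|w-w'\|_{W^{1,p}}\lesssim\|g-g'\|_{W^{1,p}}$ for $g\neq g'$, whereas you fold the Lipschitz dependence of the orthonormal coframe and connection forms on $g$ directly into the composition; these are cosmetically different presentations of the same bookkeeping.
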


{In the above, ${\rm Sym}_{d\times d}T^*M$ denotes the space of symmetric $2$--forms
on the cotangent bundle $T^*M$ (which can be expressed as $d\times d$ symmetric matrices),
and ${\rm Sym}^+_{d\times d}T^*M$ consists of positive definite elements in ${\rm Sym}_{d\times d}T^*M$.
Also, it is classical (see \cite[p.56, Exercise 8(2)]{p}) that ${\rm Isom}_+(\R^{d+k}) = \R^{d+k}\rtimes \so(d+k)$;
that is,  orientation-preserving Euclidean rigid motions on $\R^{d+k}$
consist of translations and orientation-preserving rotations.
}\bigskip\\
\begin{proof}
{In this proof, denote by $\|\bullet\|_{\mathbb{T}}$ and $\|\bullet\|_{\mathbf{V}}$ the natural norms induced
from the suitable product or quotient topologies on $\mathbb{T}(M)$ and $\mathbf{V}(M)$, respectively.
More precisely, for any $h \in W^{1,p}(M;\mathfrak{gl}(d;\R))$, $D \in L^p(M;\mathfrak{gl}(d;\R))$,
and $\Lambda \in L^p(M;\mathcal{A}(E))$, we write
$$
\|(h,D,\Lambda)\|_{\mathbb{T}} := \|h\|_{W^{1,p}} + \|D\|_{L^p} + \|\Lambda\|_{L^p};
$$
and, for any $\vartheta \in \mathbf{V}(M)$, we write
$$
\|\vartheta\|_{\mathbf{V}}:=\inf_{\mathcal{I} \in {\rm Isom}_+(\R^{d+k})} \|\vartheta \circ \mathcal{I}\|_{W^{2,p}}.
$$
In general, arguments $h$ and $D$ in $\|(h,D,\Lambda)\|_{\mathbb{T}}$ above are not required to be tensorial.
Furthermore, in the above, the domains over which the Sobolev norms are taken are suitable subsets of $M$,
which will be clear from the context.}

We first use \cite[Theorem 5.2]{chenli} to conclude that
mapping $\Phi: \mathbb{T}(M) \map \mathbf{V}(M)$
is well-defined.
Then it suffices to establish
\begin{equation}\label{5.8a}
\|\Phi(g,B,\na^E)-\Phi(g',B',\na^{E'})\|_{{\mathbf{V}}} \leq C\|(g,B,\na^E)-(g',B',\na^{E'})\|_{{\mathbb{T}}}
\end{equation}
for some constant $C$, provided that the right-hand side is sufficiently small. The arguments are divided into three steps.

\smallskip
{\bf 1.}  The crucial step is to translate the Gauss--Codazzi--Ricci equations \eqref{gauss equation}--\eqref{ricci equation}
into the Pfaff--Poincar\'{e} system. This is achieved via the Cartan structural equations ({\it cf.} \cite{chern} for details).
Let $U\subset M$ be a local chart on which $E$ is trivialized.
For a local orthonormal frame of vector fields $\{\p_i\}_{i=1}^d \subset \G(TU)$,
let $\{\omega^i\}_{i=1}^d$ be the dual co-frame of differential $1$-forms.
In addition, let $\{\eta_\alpha\}_{\alpha=d+1}^{d+k}$ be an orthonormal frame on the typical fibre of $E$.
Then, for indices $1\leq i,j \leq d$, $\,d+1\leq \alpha,\beta \leq d+k\,$, and $1\leq a,b,c \leq d+k\,$,
define differential $1$-forms $\{\omega^a_b\}$ by
\begin{eqnarray}
&&\omega^i_j (\p_k) := \langle \na_{\p_k}\p_j, \p_i\rangle,\\
&& \omega^i_\alpha (\p_j) = -\omega^\alpha_i(\p_j): = \langle B(\p_i, \p_j), \eta_\alpha\rangle,\\
&& \omega^\alpha_\beta (\p_j) := \langle \na^E_{\p_j}\eta_\alpha,\eta_\beta\rangle.
\end{eqnarray}
We write $\mathbf{W} := \{\omega^a_b\}$ as the $\mathfrak{so}(d+k)$-valued differential $1$-forms on $U$,
or equivalently, the $1$-form-valued matrix field, where $\mathfrak{so}(d+k)$ is the space
of anti-symmetric $(d+k) \times (d+k)$ matrices. Schematically, we may write
$$
\mathbf{W} =
\begin{bmatrix}
\na&B\\
-B&\na^E
\end{bmatrix},
$$
where $\na=\na^g$ is the Levi-Civita connection on $M$ corresponding to metric $g$.
On the other hand, we augment the $1$-forms $\{\omega^i\}$ by setting
\begin{equation}\label{w}
w := \big(\omega^1,\omega^2,\cdots,\omega^d,\underbrace{0,\cdots,0}_{k \text{ times}}\big)^\top.
\end{equation}
Thus, $w$ is a $1$-form-valued $(d+k)$-vector, or a $\R^{d+k}$-valued $1$-form.

It can be checked
that the Gauss--Codazzi--Ricci equations \eqref{gauss equation}--\eqref{ricci equation}
are equivalent to the following two {\em structural equations} (
{see \cite[\S 5.3, Step~4]{chenli}}):
\begin{equation}\label{structural eqs}
\dd w = w \wedge \mathbf{W},\qquad \dd\mathbf{W} + \mathbf{W} \wedge \mathbf{W} =0,
\end{equation}
even for the lower regularity under consideration.
Moreover, the desired isometric immersion $f \in W^{2,p}(M;\R^{d+k})$ can be solved from the systems of
first-order nonlinear PDEs as below:
\begin{equation}\label{pfaff}
\mathbf{W} = \dd A \cdot A^\top,\qquad A(x_0) = A_0,
\end{equation}
and
\begin{equation}\label{poincare}
\dd f = w \cdot A, \qquad f(x_0)=f_0.
\end{equation}
In the above, $\dd$ is the exterior derivative, $\wedge$ denotes the intertwining of wedge product on differential forms
and the matrix product ($\cdot$), $A$ is a field of orthogonal $d+k$ matrices to be solved in some neighbourhood
$V \subset U \subset M$ containing point $x_0$,
 $A_0$ and $f_0$ are arbitrary initial data, \eqref{pfaff} is known as the {\em Pfaff} system,
 and \eqref{poincare} as the {\em Poincar\'{e}} system.
We also note that $\mathbf{W}$ is in $L^p$.

\smallskip
{\bf 2.} We now prove \eqref{5.8a} for the case that $g=g'$.
With the above preparation,
the analytic lemmas due to Mardare \cite{m05} may be directly applied.
By \cite[Theorem 7]{m05}, the Pfaff system \eqref{pfaff} has a unique solution $A\in W^{1,p}(V; \mathfrak{so}(d+k))$
if and only if a compatibility condition of involutiveness holds in the sense of distributions; see \cite[Eq.\,(5.12)]{chenli}.
Also, by \cite[Theorem 6.5]{m07} and a result by Schwartz \cite{schwartz},
the Poincar\'{e} system \eqref{poincare} has a unique solution $f \in W^{2,p}(V; \R^{d+k})$ if and only if a compatibility condition
of exactness holds in the sense of distributions; see \cite[Eq.\,(5.14)]{chenli}.
Furthermore, the solutions for the Pfaff and Poincar\'{e} systems depend continuously on the source term:
if $A$ and $A'$ are two solutions for \eqref{pfaff} with the same initial data associated with source terms $\mathbf{W}$ and $\mathbf{W}'$ respectively,
then
\begin{align}\label{A-A'}
\|A-A'\|_{W^{1,p}(V)}\leq C\|\mathbf{W}-\mathbf{W}'\|_{L^p(V)}.
\end{align}
If $f$ and $f'$ are two solutions to \eqref{poincare} with the same initial data, then
\begin{equation}\label{f-f'}
\|f-f'\|_{W^{2,p}(V)} \leq C\|w\cdot A-w'\cdot A'\|_{W^{1,p}(V)},
\end{equation}
where $C=C(d,k,p,V)$, and $V$ is a sufficiently small neighbourhood on $M$, \textit{i.e.}, these estimates are local.
In this case, $g=g'$ so that $w=w'$. Then we have
$$
\|f-f'\|_{W^{2,p}(V)} \leq C\|A-A'\|_{W^{1,p}(V)}.
$$
However, by well-known computations in differential geometry (see \cite[Steps 4--5, \S 5.3]{chenli} for details),
the structural equations \eqref{structural eqs} are equivalent to the aforementioned compatibility conditions
for the Pfaff and Poincar\'{e} systems, respectively.

As indicated earlier, $\Phi: \mathbb{T}(M) \map \mathbf{V}(M)$ is well-defined.
In \eqref{A-A'}--\eqref{f-f'}, we have proved that
$$
\|f-f'\|_{W^{2,p}(V)} \leq C\|\mathbf{W}-\mathbf{W}'\|_{L^p(V)},
$$
provided that the initial data coincide.
Without loss of generality, we can always assume the same initial data --- this can be achieved by applying a Euclidean
isometry in ${\rm Isom}_+(\R^{d+k})$, which is negligible by the quotient construction of $\mathbf{V}(M)$.
By the definition of $\mathbf{W}$, it is clear that
$$
\|\mathbf{W}-\mathbf{W}'\|_{L^p(V)} = \|B-B'\|_{L^p(V)} + \|\na^E - \na^{E'}\|_{L^p(V)}.
$$
Thus, the proof is complete on a local chart $V$ in the case that $g=g'$.
{The same holds if $V$ is replaced by $M$, thanks to the assumption that $M$ is simply-connected
and a monodromy argument. See Mardare  \cite[Theorems~6.4 and 6.5]{m07} for detailed arguments on simply-connected
Euclidean domains, which can be adapted to manifolds; also see Step 7 in \cite[\S 5.3]{chenli}.}

\smallskip
{\bf 3.}
It remains to prove \eqref{5.8a} for
{the general case $g \neq g'$.
It follows from the definition of $\mathbf{W}$ given in Step~1 that
$$
\|\mathbf{W}-\mathbf{W}'\|_{L^p(V)}\leq C \left\{\|\na^g -\na^{g'}\|_{L^p(V)} + \|B-B'\|_{L^p(V)} + \|\na^E-\na^{E'}\|_{L^p(V)}\right\},
$$
where $C$ is purely dimensional.}

{Note that $\|\na^g -\na^{g'}\|_{L^p(V)}$ is in turn controlled by $\|g-g'\|_{W^{1,p}(V)}$.}
Indeed, the coordinate-wise components of the Levi-Civita connection is given by the Christoffel symbols
$\G^k_{ij} = \frac{1}{2}g^{kl} \big\{\p_i g_{jl} + \p_j g_{il} - \p_l g_{ij}\big\}$,
where $g^{ij}=(g_{ij})^{-1}$.
As in $\S \ref{sec: convergence}$, recall that $\na = g^{-1} \star \p g$, where $\p$ denotes the (Euclidean) derivative.
Then
\begin{align*}
\|\na^g - \na^{g'}\|_{L^p(V)} =&\, \big\| \big(g^{-1}-(g')^{-1}\big)\star\p g + (g')^{-1} \star \p(g-g') \big\|_{L^p(V)}\\
\leq &\, \|g^{-1}(g-g')(g')^{-1}\|_{L^\infty(V)} \|\p g\|_{L^p(V)}\\
   &\, + \|(g')^{-1}\|_{L^\infty(V)} \|\p (g-g')\|_{L^p(V)},
\end{align*}
which is bounded by $C \|g-g'\|_{L^p(V)}$,
thanks to the Sobolev--Morrey embedding: $W^{1,p}_{\rm loc} (M) \emb L^\infty_{\rm loc}(M)$ for $p>d$,
where $C$ depends on the $W^{1,p}$-norm of $g$ and $g'$ (which is allowed as the theorem concerns only the local continuity of $\Phi$).
Thus, in view of Eq.\,\eqref{A-A'}, we have
\begin{align}\label{A-A', g neq g' case}
\|A-A'\|_{W^{1,p}(V)} &\leq C \|\mathbf{W}-\mathbf{W}'\|_{L^p(V)} \nonumber\\
&\leq C\|(g,B,\na^E)-(g',B',\na^{E'})\|_{\mathbb{T}}.
\end{align}
In particular,  $A \in W^{1,p}(V; \mathfrak{gl}(n,\R))$.
Then we consider the Poincar\'{e} system (as before, with the same initial data) associated to $g$ and $g'$:
\begin{equation*}
\dd f = w \cdot A, \qquad \dd f'=w'\cdot A',
\end{equation*}
where $w$ and $w'$ are defined as those in \eqref{w} with respect to $g$ and $g'$, respectively.
Since $w$ and $w'$ consist of the dual co-frames of the same orthonormal frame on $M$, then
\begin{equation}\label{w-w'}
\|w-w'\|_{W^{1,p}(V)} \leq C\|g-g'\|_{W^{1,p}(V)},
\end{equation}
where $C$ is a dimensional constant.
Thus, taking the difference of the two Poincar\'{e} systems, we see that
\begin{equation}
\dd (f-f') = (w-w')A + w(A-A').
\end{equation}
Then we have the following estimates:
\begin{align}\label{f-f', g neq g' case}
&\|f-f'\|_{W^{2,p}(V)} \nonumber\\
&\leq \|(w-w')A + w(A-A')\|_{W^{1,p}(V)} \nonumber\\
&\leq C\big\{\|A\|_{L^\infty(V)}\|w-w'\|_{W^{1,p}(V)} + \|A\|_{W^{1,p}(V)}\|w-w'\|_{L^\infty(V)} \nonumber\\
&\qquad\,\,\, + \|w\|_{L^\infty(V)}\|A-A'\|_{W^{1,p}(V)}  + \|w\|_{W^{1,p}(V)} \|A-A'\|_{L^\infty(V)}\big\}\nonumber\\
&\leq C\big\{\|A\|_{W^{1,p}(V)}\|w-w'\|_{W^{1,p}(V)} + \|w\|_{W^{1,p}(V)}\|A-A'\|_{W^{1,p}(V)}\big\}\nonumber\\
&\leq C'\big\{\|w-w'\|_{W^{1,p}(V)} + \|A-A'\|_{W^{1,p}(V)}\big\},
\end{align}
where we have used the estimates for the Poincar\'{e} system ({\it cf}. Mardare \cite{m07} and Schwartz \cite{schwartz}) in the first inequality,
the usual interpolation inequality in the second inequality, and  the Morrey--Sobolev embedding in the third inequality,
and constant $C'$ depends on $\|A\|_{W^{1,p}(V)}$ and $\|w\|_{W^{1,p}(V)}$, which again is allowed due to the locality of $\Phi$.

Therefore, combining Eqs.\,\eqref{A-A', g neq g' case}--\eqref{w-w'} with \eqref{f-f', g neq g' case} together,
we complete the proof.
\end{proof}

\medskip
Finally, let us revisit the weak rigidity theorem of isometric immersions established
in \cite{chenli} ({\it i.e.}, Proposition \ref{propn: chenli, weak rigid thm of isom imm}).
A slightly stronger version can be deduced --- In view of Theorem \ref{thm: continuous dependence},
we can drop the uniform $W^{2,p}_\loc$--boundedness of isometric immersions in the hypothesis:

\begin{corollary}
Let $M$ be a $d$-dimensional simply-connected Riemannian manifold with $W^{1,p}$-metric $g$ for $p>d$.
Let $\{\Phi^\e\}_{\e>0}$ be a family of isometric immersions of $(M,g)$ into
{the Euclidean space $\R^D$},
whose second fundamental forms and normal connections are $B^\e$ and $\na^{\e,\perp}$.
Assume that $B^\e$ and $\na^{\e,\perp}$ are uniformly bounded in $L^p_\loc$.
Then, modulo translations and rotations, $\Phi^\varepsilon$ converges weakly in $W^{2,p}_\loc$ to an isometric immersion $\Phi$ of $(M,g)$ and, more importantly,
the second fundamental form and normal connection of $\Phi$ are weak $L^p_\loc$-limits of $B^\e$ and $\na^{\e,\perp}$,
obeying the Gauss--Codazzi--Ricci equations \eqref{gauss equation}--\eqref{ricci equation}.
\end{corollary}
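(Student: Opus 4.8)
The plan is to deduce the corollary by combining the continuous dependence theorem, Theorem~\ref{thm: continuous dependence}, with the weak rigidity theorem, Proposition~\ref{propn: chenli, weak rigid thm of isom imm}: the former will supply exactly the uniform $W^{2,p}_\loc$--bound (modulo Euclidean rigid motions) that is assumed a priori in the latter, so that this hypothesis becomes redundant once the metric is held fixed and only $(B^\e,\na^{\e,\perp})$ is controlled in $L^p_\loc$.

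First I would record that, since each $\Phi^\e$ is a $W^{2,p}_\loc$--isometric immersion of the \emph{fixed} Riemannian manifold $(M,g)$ into $\R^D$, its second fundamental form $B^\e$ and normal connection $\na^{\e,\perp}$ --- regarded, as in \cite{chenli}, as data on one fixed $\R^k$-bundle $E$ with $k=D-d$ --- satisfy the Gauss--Codazzi--Ricci equations \eqref{gauss equation}--\eqref{ricci equation} in the distributional sense, simply because these are the compatibility equations that the metric, second fundamental form and normal connection of any isometric immersion must obey. Hence $(g,B^\e,\na^{\e,\perp})\in\mathbb{T}(M)$ for every $\e$. Since $g$ is common to all $\Phi^\e$ and $\{(B^\e,\na^{\e,\perp})\}$ is uniformly bounded in $L^p_\loc$ by assumption, the family $\{(g,B^\e,\na^{\e,\perp})\}$ is bounded in $\mathbb{T}(M)$ in the local topology.

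Next I would invoke the continuous dependence estimate underlying Theorem~\ref{thm: continuous dependence} in the case in which the two metrics coincide, which is the present situation. In that case the structural $1$-form $\mathbf{W}$ depends only on $(B^\e,\na^{\e,\perp})$, the Pfaff and Poincar\'{e} estimates \eqref{A-A'}--\eqref{f-f'} hold with constants depending only on a bound for the data, and the simply-connectedness of $M$ together with the monodromy argument in Step~2 of the proof of Theorem~\ref{thm: continuous dependence} globalises them from a chart $V$ to $W^{2,p}_\loc(M)$. Fixing a reference index $\e_0$, I therefore obtain rigid motions $\mathcal{I}^\e\in{\rm Isom}_+(\R^D)$ such that $\Psi^\e:=\mathcal{I}^\e\circ\Phi^\e$ satisfy
\[
\|\Psi^\e-\Psi^{\e_0}\|_{W^{2,p}_\loc}\leq C\,\big\|(B^\e,\na^{\e,\perp})-(B^{\e_0},\na^{\e_0,\perp})\big\|_{L^p_\loc},
\]
with $C$ independent of $\e$; by the uniform $L^p_\loc$--bound the right-hand side is uniformly bounded, so $\{\Psi^\e\}$ is uniformly bounded in $W^{2,p}_\loc$. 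Since post-composition with a Euclidean rigid motion leaves the induced metric, the second fundamental form and the normal connection unchanged (upon the natural identification of normal bundles), each $\Psi^\e$ is again an isometric immersion of $(M,g)$ into $\R^D$ with second fundamental form $B^\e$ and normal connection $\na^{\e,\perp}$. This is the step I expect to be the main obstacle: one must check that the continuous dependence estimate of Theorem~\ref{thm: continuous dependence} may be applied with a constant that is uniform over the entire bounded family $\{(g,B^\e,\na^{\e,\perp})\}$, and not merely locally near a single point of $\mathbb{T}(M)$. This is precisely where it matters that, for a fixed metric, the Mardare estimates \eqref{A-A'}--\eqref{f-f'} are genuine continuous dependence estimates whose constants depend only on bounds for the data; the ``$g\neq g'$'' complications (inverses of $g$, smallness of the perturbation) in the proof of Theorem~\ref{thm: continuous dependence} do not enter here.

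Finally I would apply Proposition~\ref{propn: chenli, weak rigid thm of isom imm} to the family $\{\Psi^\e\}$, which now satisfies all of its hypotheses --- a family of isometric immersions of the simply-connected $(M,g)$ with $W^{1,p}$-metric, $p>d$, uniformly bounded in $W^{2,p}_\loc$. The proposition yields a subsequence along which $\Psi^\e$ converges weakly in $W^{2,p}_\loc$ to an isometric immersion $\Phi$ of $(M,g)$ whose second fundamental form and normal connection are the weak $L^p_\loc$--limits of $(B^\e,\na^{\e,\perp})$ and obey \eqref{gauss equation}--\eqref{ricci equation}. Since $\Psi^\e=\mathcal{I}^\e\circ\Phi^\e$, this is exactly the asserted convergence ``modulo translations and rotations'', completing the proof.
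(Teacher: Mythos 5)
Your proof is correct and matches the paper's intended (but unwritten) argument: the paper merely remarks that the corollary follows ``in view of Theorem~\ref{thm: continuous dependence},'' and you correctly supply the details by using the $g=g'$ case of that theorem to normalize $\{\Phi^\e\}$ by rigid motions into a uniformly $W^{2,p}_\loc$-bounded family $\{\Psi^\e\}$, and then invoking Proposition~\ref{propn: chenli, weak rigid thm of isom imm}. You also rightly flag the one genuine subtlety --- that the ``local Lipschitz'' constant must be uniform over the family --- and dispose of it correctly by noting that with a fixed metric the Pfaff--Poincar\'{e} estimates \eqref{A-A'}--\eqref{f-f'} carry constants depending only on an $L^p$-bound for the data, which is supplied by hypothesis.
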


\medskip

\noindent
{\bf Acknowledgement}.
The authors would like to thank Tristan Giron and Cy Maor for helpful
discussions and remarks,
and are indebted to the anonymous referee for  meticulous reading and constructive suggestions.
The research of Gui-Qiang G. Chen  was supported in part by
the UK Engineering and Physical Sciences Research Council Awards
EP/L015811/1 and EP/V008854/1, and the Royal Society--Wolfson Research Merit Award WM090014 (UK).
The research of Siran Li was supported in part by the China Key Laboratory of Scientific and
Engineering Computing (Ministry of Education) Grant AF0710029/011, Shanghai Frontier Research Institute for Modern Analysis,
and the UK EPSRC Science and Innovation Award to
the Oxford Centre for Nonlinear PDE (EP/E035027/1). Part of the paper was completed
during his stay in Montr\'{e}al as a CRM--ISM postdoctoral fellow,
for which S. Li is indebted to the Centre de Recherches Math\'{e}matiques and the Institut des
Sciences Math\'{e}matiques for their hospitality.
M. Slemrod was supported in part by Simons Collaborative Research Grant 232531 from the Simons Foundation;
M. Slemrod also thanks the Oxford Centre for Nonlinear PDE for
the hospitality during his visits.

\end{document}